\newtheorem{rem}{Remark}
\newtheorem{cor}{Corollary}
\newtheorem{lem}{Lemma}
\newtheorem{thm}{Theorem}
\newtheorem{prop}{Proposition}
\providecommand{\tabularnewline}{\\}
\begin{document}

\title{\LARGE \bf
Stability Analysis of Multi-Period Electricity Market with Heterogeneous
Dynamic Assets
}

\author{Lin Zhao and Wei Zhang
\thanks{
This work was supported (in part) by the Control of Complex Systems Initiative, a Laboratory Directed Research and Development (LDRD) program at the Pacific Northwest National Laboratory.
\newline
\indent L. Zhao is with the Department of Electrical and Computer Engineering, The Ohio State University, Columbus, OH 43210, USA 
{\small (email: zhao.833@osu.edu)}
\newline 
\indent W. Zhang is with the Department of Electrical and Computer Engineering, The Ohio State University, Columbus, OH 43210, USA, with a joint appointment in the Electricity Infrastructure Group, Pacific Northwest National Laboratory, Richland, WA 99354, USA
{\small (email: zhang.491@osu.edu)}}
}
\maketitle
\begin{abstract}
Market-based coordination of demand side assets has gained great interests
in recent years. In spite of its efficiency, there is a risk that
the interaction between the dynamic assets through the price signal
could result in an unstable closed-loop system. This may cause oscillating
power consumption profiles and high volatile energy price. This paper
proposes an electricity market model which explicitly considers the
heterogeneous dynamic asset models. We show that the market dynamics
can be modeled by a discrete nonlinear system, and then derive analytical
conditions to guarantee the stability of the market via contraction
analysis. These conditions imply that the market stability can be
guaranteed by choosing bidding functions with relatively shallower
slopes in the linear region. Finally, numerical examples are provided
to demonstrate the application of the derived stability conditions.
\end{abstract}

\section{Introduction}

To adapt to the distributed generation and increasing penetration
of volatile renewable energies and to improve the overall efficiency
of the power industry, the power systems have undergone a substantial
change from centrally controlled, vertically integrated organizations
to decentrally controlled, deregulated systems. Market mechanism has
been introduced at various levels of power systems to create competition.
In particular, a local electricity market can be created to motivate
self-interested distributed energy resources (DERs) to realize efficient
energy allocation and achieve system-level objectives. Several demonstration
projects have been implemented to validate the idea which showed promising
practice. For example, the GridWise\textregistered\  demonstration
project by the Pacific Northwest National Laboratory showed that the
market-based coordination of residential loads could reduce the utility
demand and congestion at key times~\cite{FullerSchneiderChassin2011}.
The AEP Ohio demonstration project~\cite{AEP} further showed the
capability of enforcing both system-wide constraints and local constraints,
while optimizing both system and individual objectives~\cite{WidergrenSubbaraoFullerEtAl2014}.

Motivated by these projects, recent studies have been focused on the
market mechanism design for engaging different types of DERs~\cite{li2015,HaoCorbinKalsiEtAl2017,BejestaniAnnaswamySamad2014,LiChenLow2011}.
For example, the authors in~\cite{li2015} proposed a market mechanism
to coordinate a population of thermostatically controlled loads (TCLs)
for demand response. The proposed bidding strategies incorporated
the TCL dynamics in order to improve the accuracy and efficiency of
the load coordination. For commercial HVACs (heating, ventilation
and air conditioning), a double-auction market structure was designed
which takes into account the detailed nonlinear building models~\cite{HaoCorbinKalsiEtAl2017}.
The proposed market was demonstrated to be very efficient at peak
shaving and load shifting services. Other market models for demand
response using batteries or PEVs (plug-in electric vehicles) were
also proposed in~\cite{ChenLiLowEtAl2010,LiChenLow2011}.

Despite the popularity of market-based coordination, there has been
a growing concern about the risk of the instability of the power market.
Under some extreme conditions, the aggregate demand and the energy
price can be unstable or demonstrate high volatility over time. Various
factors that contribute to the instability of market-coordinated TCLs
have been examined in~\cite{NazirHiskens}. Some earlier works abstracted
a simple linear differential equation model to quantify the power
market stability~\cite{Alvarado1997,Alvarado1999,NutaroProtopopescu2009}.
A discrete time nonlinear model based on the marginal cost pricing
mechanism was proposed in~\cite{RoozbehaniDahlehMitter2012}. It
assumed that the demand side did not bid into the market and its utility
function was unknown to the system operator. The focus was therefore
on the market instability caused by the uncertain demand prediction
of the coordinator. Under the same framework of~\cite{RoozbehaniDahlehMitter2012},
the authors in~\cite{ZhouRoozbehaniDahlehEtAl2017} considered a
more realistic dynamic consumption model. It was obtained from solving
an optimal inventory control problem, and the market stability was
found to be related to the ratio between the marginal backlog disutility
and the marginal cost of supply.

The aforementioned works only considered aggregate demand models in
autoregression form depending on the previous consumption history
or price history~\cite{RoozbehaniDahlehMitter2012,ZhouRoozbehaniDahlehEtAl2017}.
However, in order to quantify the aggregate demand variation more
accurately, the internal dynamics and operational limits of individual
DERs must be considered. More importantly, the impact of the population
dynamics of the DERs on the overall market dynamics must be investigated
systematically. These features are nevertheless abstracted away in
the existing literature. 

In this paper, we propose a market model which explicitly considers
the individual dynamics of the DERs. Specifically, we model each DER
by a general constrained linear system. Such models have been widely
employed to describe the dynamics of its internal energy state~\cite{Hao2015,ZhaoHaoZhang2016,ZhaoZhangHaoEtAl2017}.
Moreover, we consider the bidding process of the DERs and model the
bidding functions to be dependent on the energy state. Consequently,
the market is cleared at the competitive equilibrium, which results
in an efficient energy allocation for each individual DERs. These
features distinguish our work from those by~\cite{ZhouRoozbehaniDahlehEtAl2017,RoozbehaniDahlehMitter2012}:
the latter of which assumed no bidding process and the price was ex-ante
which may not clear the market.

Under the proposed market structure with heterogeneous dynamic DER
models, we translate the analysis of the market stability into the
stability analysis of a closed-loop system of the DER dynamics. In
general, this system can be viewed as a model predictive control (MPC)
system~\cite{MayneRawlingsRaoEtAl2000,GrunePannek2011} or systems
with optimization based controller~\cite{HeathWills2005,HeathLi2008,Primbs2001,KordaJones2017}.
By assuming quadratic utility and cost functions, it can be further
shown to be a piecewise linear system~\cite{BemporadMorariDuaEtAl2002}.
Alternatively, it can be viewed as an input saturation system with
state-dependent saturation limits. Most of the existing stability
results considered only special cases of such systems~\cite{MayneRawlingsRaoEtAl2000,HeathWills2005,HeathLi2008,Primbs2001,Liu1992,HuLin2001,HuLin2001a},
which are generally not applicable to the system considered in this
paper. In fact, even the analysis of these much simpler systems are
very challenging. While general numerical stability tests via Lyapunov
methods may be employed, they lend little insights into the market
practice. In addition, they are very conservative and can become intractable
as the number of DERs increases.

To address the above challenges, we propose a contraction analysis
based approach to analyzing the stability of such systems. This approach
enables us to derive analytical conditions that guarantee the market
stability. These conditions provide important insights into the design
of the bidding functions. The key observation is that the market stability
can be always guaranteed by selecting shallower bidding functions,
that is, the linear region of the bidding function should have a relatively
small slope. Moreover, these conditions are very mild, and thus leave
the full freedom to the individual users to design their desired bidding
functions while ensuring a stable electricity market.

The rest of this paper is organized as follows. The market structure
and its equilibrium are described in Section~II. The characterization
of the equilibrium and the stability results are presented in Section
III. Numerical examples illustrating the application of the stability
results is provided in Section IV. Finally, the paper is concluded
in Section IV.

\textbf{Notation}: We use ${\bf 1}_{m}$ to represent the $m$ dimensional
column vector of all ones, and $I_{m}$ the $m$ dimensional identity
matrix. The index set $\{1,2,...,m\}$ will be denoted by the $\mathcal{M}$.
Let $\mathcal{X}$ be a subset of $\mathbb{R}^{m}$, then for $x\in\mathbb{R}^{m}$,
the set $\mathcal{X}-x$ is defined as $\{x'-x,\,\forall x'\in\mathcal{X}\}$.
For a symmetric matrix $S$, the inequality $S\succ0$ ($S\succeq0$
) means that the matrix is positive definite (positive semi-definite).
We denote by $\mathcal{H}_{Q}$ with $Q\succ0$ the Hilbert space
$\mathbb{R}^{m}$ with inner product $\left\langle \cdot,\cdot\right\rangle _{Q}:\,\mathbb{R}^{m}\times\mathbb{R}^{m}\mapsto\mathbb{R}$
defined as $\left\langle x,y\right\rangle _{Q}:=x^{T}Qy$, and induced
norm $\left\Vert \cdot\right\Vert _{Q}:\,\mathbb{R}^{m}\mapsto\mathbb{R}_{\geq0}$
defined as $\left\Vert x\right\Vert _{Q}:=\sqrt{x^{T}Qx}$. The Euclidean
2-norm will be denoted by $\left\Vert \cdot\right\Vert _{2}$, The
projection operator in $\mathcal{H}_{Q}$, denoted by $\text{Proj}_{C}^{Q}(x):\,\mathbb{R}^{m}\mapsto C$,
is defined as $\text{Proj}_{C}^{Q}(x):=\arg\min_{y\in C}\left\Vert x-y\right\Vert _{Q}=\arg\min_{y\in C}\left\Vert x-y\right\Vert _{Q}^{2}$.
When $Q=I_{m}$, we simply write $\text{Proj}_{C}(x)$ which is the
standard projection in $\mathbb{R}^{m}$.

\section{Problem Formulation}

In this section, we will first describe an electricity market model
which involves the bidding and clearing processes at each market period.
The problem of market stability analysis is then defined formally.

\subsection{Market Structure}

Usually a system coordinator is running an electricity market to schedule
and guide the power usage of $m$ DERs. We assume that the $i$th
DER is modeled by the following discrete time scalar linear system
subject to both state and input constraints,
\begin{equation}
x_{i}^{+}=a_{i}x_{i}+d_{i},\label{eq:sys}
\end{equation}
where $x_{i}$, $x_{i}^{+}\in\mathcal{X}_{i}$ represent the current
and the successive energy states of the DER, respectively, and $d_{i}\in\mathcal{D}_{i}$
is its power consumption. The state and input constraint sets will
be denoted by $\mathcal{X}_{i}=[\underline{x}_{i},\bar{x}_{i}]$ and
$\mathcal{D}_{i}=[\underline{d}_{i},\bar{d}_{i}]$, respectively.
The constant $a_{i}\in(0,1]$ represents the energy dissipation rate.
This model has been widely used to describe the power flexibility
of the HVAC systems (when $a\in(0,1)$) or energy storage (when $a=1$),
see for example, \cite{Hao2015,ZhaoZhangHaoEtAl2017,ZhaoHaoZhang2016}.

To ensure the controllability of the system~(\ref{eq:sys}), we impose
the following conditions
\begin{equation}
a_{i}\underline{x}_{i}+\bar{d}_{i}>\underline{x}_{i},\,a_{i}\bar{x}_{i}+\underline{d}_{i}<\bar{x}_{i}.\label{eq:ctrl}
\end{equation}

Condition~(\ref{eq:ctrl}) guarantees that the DER can be controlled
into $\mathcal{X}_{i}$ with $d_{i}\in\mathcal{D}_{i}$.

Note that the demand of the DER is changing dynamically with the current
states. For example, the DER is less willing to procure power if its
energy state is close to the upper bound. We assume that each DER
submits a bid on the desired power and price to the coordinator in
order to meet its own demand. Given the energy price, the demand of
the DER will be determined by the bidding function. The bidding function
of the DER can be considered as the solution of a payoff maximization
problem defined as follows,
\begin{equation}
\begin{array}{ll}
\mbox{maximize} & v_{i}(d_{i},x_{i})-\lambda d_{i}\\
\mbox{subject to:} & \text{(\ref{eq:sys})},\,x_{i}^{+}\in\mathcal{X}_{i},\,d_{i}\in\mathcal{D}_{i},
\end{array}\label{eq:utility}
\end{equation}
over $d_{i}$, where $v_{i}:\,\,\mathbb{R}\times\mathbb{R}\mapsto\mathbb{R}$
is the consumer's utility function dependent on the current state
$x_{i}$, and $\lambda$ is the energy price. 

Note that the optimal consumption $d_{i}^{*}(\lambda;x_{i})$ is solved
only when $x_{i}\in\mathcal{X}_{i}$. In reality, the DER may not
start with a initial condition that is in $\mathcal{X}_{i}$, then
we simply assume the following consumption policy, 
\begin{equation}
d_{i}=\begin{cases}
\bar{d}_{i}, & \text{if }x_{i}<\underline{x}_{i},\\
\underline{d}_{i}, & \text{if }x_{i}>\bar{x}_{i}.
\end{cases}\label{eq:outside}
\end{equation}
Then under this controllability condition~(\ref{eq:ctrl}), the system
will converge exponentially to $\text{\ensuremath{\mathcal{X}}}_{i}$
from any initial state $x_{i}(0)\in\mathbb{R}\backslash\mathcal{X}_{i}$
under~(\ref{eq:outside}). Therefore, without loss of generality
we will always assume that the initial state lies in $\mathcal{X}_{i}$.

The bidding function $d_{i}^{*}(\lambda;x_{i})$ describes the price
responsiveness of the DER's demand, that is, the change of the demand
with respect to the price. As a general common assumption, we assume
that the utility function $v_{i}$ is a concave function of $d_{i}$.

The coordinator procures energy from electricity providers to meet
the aggregate demand of the DERs. We assume that there is associated
a fixed cost function $c:\,\mathbb{R}\mapsto\mathbb{R}$ with the
providers to supply the energy. As usual, the cost function $c$ is
assumed to be convex, increasing, and $\dot{c}(0)>0$. The providers
are reimbursed at price $\lambda$ and therefore seeks to maximize
its profit by supplying 
\begin{equation}
s^{*}(\lambda)=\arg\underset{s}{\max}\,\lambda s-c(s),\label{eq:cost}
\end{equation}
amount of power. The function $s^{*}(\lambda)$ describes the price
sensitivity of the power supply.

To clear the market, an aggregate demand curve is constructed by the
coordinator from the submitted bidding functions~\cite{li2015}.
It is the inverse mapping of the aggregate demand $\sum_{i\in\mathcal{M}}d_{i}^{*}(\lambda;x_{i})$,
which describes the marginal utility. The supply curve is given by
the optimality condition of~(\ref{eq:cost}), which is $\lambda=\dot{c}(s)$,
i.e., the marginal cost as a function of the supply. Then the market
is cleared at the intersection point $(\lambda_{c},\,s^{*})$ of the
demand curve and the supply curve, where the aggregate demand equals
the supply and the marginal utility equals the marginal cost, see
Fig.~\ref{fig:MC}. This marginal cost $\lambda_{c}$ is usually
referred to as the market clearing price. The tuple $\{\lambda_{c},\,s^{*},\,d_{i}^{*},\forall i\in\mathcal{M}\}$
will be referred to as the market equilibrium.

\subsection{Market Stability}

As discussed in the previous subsection, at each market period, the
DER consumes $d_{i}^{*}(\lambda_{c};x_{i})$ amount of energy. Then
the DER dynamics become
\begin{equation}
x_{i}^{+}=a_{i}x_{i}+d_{i}^{*}(\lambda_{c};x_{i}),\label{eq:GCL}
\end{equation}
for all $i\in\mathcal{M}$. As illustrated in Fig.~\ref{fig:CL},
this is a closed-loop system under the feedback of the market clearing
process. In particular, notice that the market clearing price can
also be viewed as a function of the energy states of the DERs. As
a result, the stability of this closed loop system must be investigated
carefully. Ideally, in the absence of the external disturbances, such
as variation of the electricity cost, coordination signal change,
and weather change, etc., the system~(\ref{eq:GCL}) should converge
to a steady state as fast as possible. In the rest of this paper,
we will establish conditions under which~(\ref{eq:GCL}) is exponentially
stable, that is, there exist an equilibrium $x^{*}\in\mathcal{\mathcal{X}}$,
$M>0$, and $\rho\in(0,1)$ such that for all initial condition $x(0)\in\mathbb{R}^{m}$,
$\forall k\in\mathbb{Z}_{+}$, we have 
\[
\left\Vert x(k)-x^{*}\right\Vert \leq M\rho^{k}\left\Vert x(0)-x^{*}\right\Vert ,
\]
where $x(k)$ is the system states at the $k$th period. Consequently,
the inherent robustness of an exponentially stable system can reduce
the price and power consumption volatility. Hereafter we will refer
to the market stability and the stability of the closed loop system~(\ref{eq:GCL})
interchangeably. 

In the next section, we will first characterize the market equilibrium
at each market period. This enables us to characterize the consumption
profile $d_{i}^{*}(\lambda_{c};x_{i})$ of the DERs. Based on these
characterizations, the stability of the closed-loop system~(\ref{eq:GCL})
is analyzed via contraction analysis.
\begin{center}
\begin{figure*}[!tp]
\centering{}%
\begin{minipage}[t]{0.32\textwidth}%
\begin{center}
\includegraphics[clip,width=0.9\linewidth]{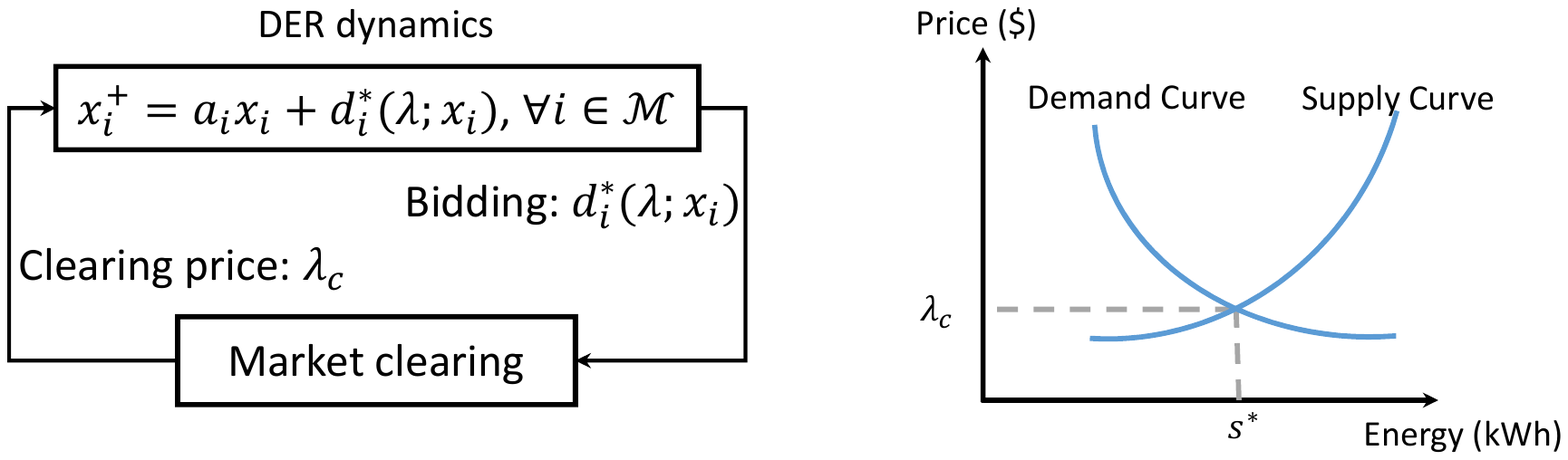}\caption{\label{fig:MC}Typical market clearing process}
\par\end{center}%
\end{minipage}\hfill{}%
\begin{minipage}[t]{0.32\textwidth}%
\begin{center}
\includegraphics[clip,width=0.95\linewidth]{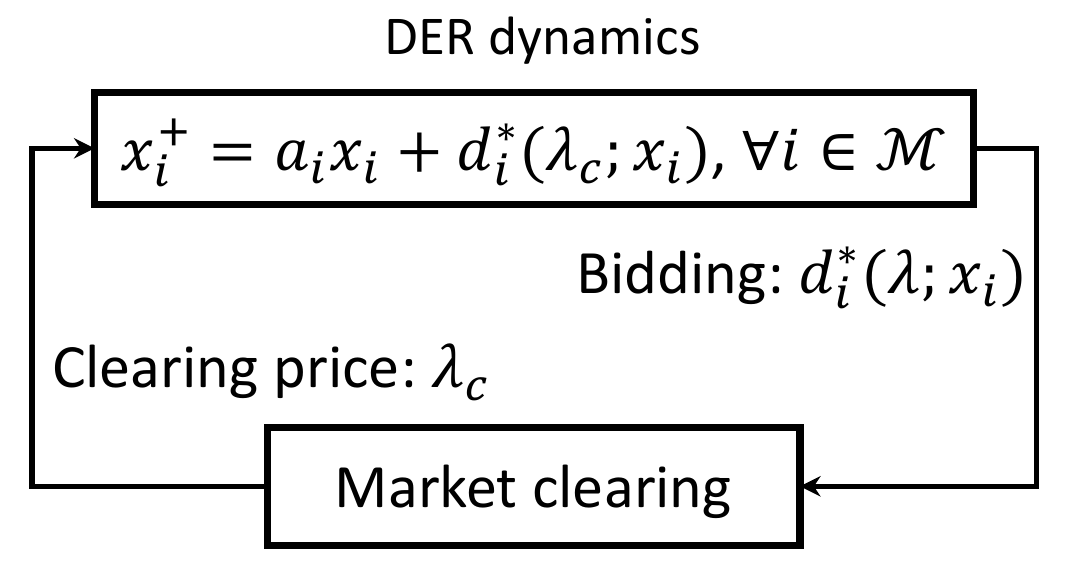}\caption{\label{fig:CL}Closed-loop DER dynamics}
\par\end{center}%
\end{minipage}\hfill{}%
\begin{minipage}[t]{0.32\textwidth}%
\begin{center}
\includegraphics[clip,scale=0.6]{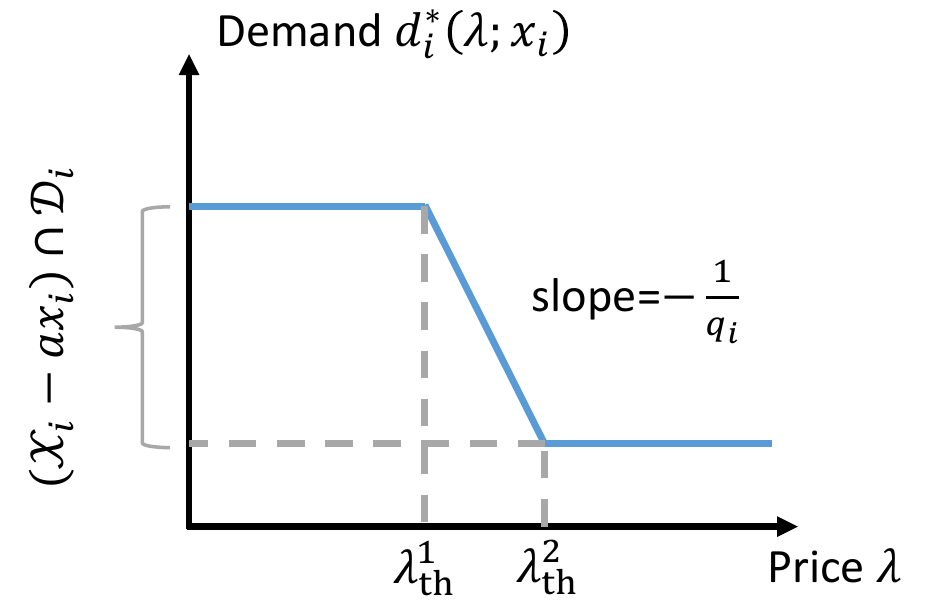}\caption{\label{fig:BidsCurve}Bidding function in~(\ref{eq:bids})}
\par\end{center}%
\end{minipage}
\end{figure*}
\par\end{center}

\section{Stability Analysis}

In this section, we will first define and characterize the market
equilibrium. We show that it can be solved from a social welfare optimization
problem. Under the assumption of quadratic utility and cost functions,
we explicitly characterize~(\ref{eq:GCL}) by a discrete nonlinear
system. We further derive analytical conditions which guarantee the
stability of this nonlinear system. 

\subsection{Competitive Equilibrium}

The intersection point of the demand curve and the supply curve represents
an equilibrium of the market. A \emph{competitive equilibrium }of
the above described electricity market is a tuple $\{\lambda^{*},\,s^{*},\,d_{i}^{*},\forall i\in\mathcal{M}\}$
such that 
\begin{itemize}
\item $d_{i}^{*}$ maximizes the $i$th consumer's payoff, that is, it is
an optimal solution to the problem~(\ref{eq:utility}), for $i\in\mathcal{M}$.
\item $s^{*}$ maximize the profit of the supplier, that is, it is an optimal
solution to the problem (\ref{eq:cost}).
\item $\lambda^{*}$ clears the market, that is, $\sum_{i}d_{i}^{*}(\lambda^{*};x_{i})=c^{-1}(\lambda^{*}).$
\end{itemize}
From the discussion of the last section, it is clear that given the
current state of each DERs, their bids are determined, and the market
is cleared at a competitive equilibrium depending on the DERs' states.
It is well known by the welfare theorems that the competitive equilibrium
is Pareto efficient and every Pareto efficient allocation is attainable
by a competitive equilibrium. The following lemma shows that any competitive
equilibrium is efficient, that is, it is an optimal solution to a
social welfare maximization problem.
\begin{lem}
The competitive equilibrium $\{\lambda^{*},s^{*},d_{i}^{*},\forall i\in\mathcal{M}\}$
of the electricity market is equivalent to the optimal solution of
the following social welfare optimization problem,
\begin{equation}
\begin{array}{ll}
\underset{d_{i},s}{\mbox{maximize}} & \sum_{i=1}^{N}v_{i}(d_{i},x_{i})-c(s)\\
\mbox{subject to:} & x_{i}^{+}=a_{i}x_{i}+d_{i},\\
 & x_{i}^{+}\in\mathcal{X}_{i},\,d_{i}\in\mathcal{D}_{i},\,\forall i\in\mathcal{M}\\
 & \sum_{i=1}^{m}d_{i}=s,
\end{array}\label{eq:social}
\end{equation}
where the market clearing price $\lambda$ emerges as the Lagrangian
dual variable associated with the demand-supply balance constraint
(the last equality constraint in~(\ref{eq:social})). 
\end{lem}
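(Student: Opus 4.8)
The plan is to recognize the social welfare problem~(\ref{eq:social}) as a convex program and to exploit the additive separability of its partial Lagrangian. First I would note that, because each $v_{i}(\cdot,x_{i})$ is concave in $d_{i}$ and $c$ is convex, while all the constraints in~(\ref{eq:social}) are affine or define convex sets, problem~(\ref{eq:social}) is a concave maximization over a convex feasible set. After eliminating $x_{i}^{+}$ through the dynamics $x_{i}^{+}=a_{i}x_{i}+d_{i}$, the state constraint $x_{i}^{+}\in\mathcal{X}_{i}$ together with $d_{i}\in\mathcal{D}_{i}$ reduces to a decoupled convex (interval) constraint on each $d_{i}$; the only coupling among the agents and the supplier is the affine balance constraint $\sum_{i}d_{i}=s$. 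The controllability condition~(\ref{eq:ctrl}) guarantees that a strictly feasible point exists, so a constraint qualification (Slater's condition) holds and strong duality applies, ensuring that an optimal dual multiplier exists and that the KKT conditions are both necessary and sufficient for global optimality.

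Next I would dualize only the balance constraint $\sum_{i}d_{i}=s$, introducing the multiplier $\lambda$, and form the partial Lagrangian
\[
L(\{d_{i}\},s,\lambda)=\sum_{i\in\mathcal{M}}\bigl(v_{i}(d_{i},x_{i})-\lambda d_{i}\bigr)+\bigl(\lambda s-c(s)\bigr),
\]
leaving the decoupled constraints $d_{i}\in\mathcal{D}_{i}$, $a_{i}x_{i}+d_{i}\in\mathcal{X}_{i}$ on each agent intact. The crucial observation is that $L$ is additively separable: maximizing it over the remaining feasible set decomposes into $m$ independent consumer subproblems, each of which is precisely the payoff maximization~(\ref{eq:utility}) evaluated at price $\lambda$, plus a supplier subproblem, which is exactly the profit maximization~(\ref{eq:cost}).

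Now I would argue the equivalence in both directions. Given the welfare-optimal $(\{d_{i}^{*}\},s^{*})$ with its optimal multiplier $\lambda^{*}$, strong duality implies $(\{d_{i}^{*}\},s^{*})$ maximizes $L(\cdot,\cdot,\lambda^{*})$ over the decoupled constraints; by separability each $d_{i}^{*}$ solves~(\ref{eq:utility}) at $\lambda^{*}$ and $s^{*}$ solves~(\ref{eq:cost}) at $\lambda^{*}$, while primal feasibility of the balance constraint gives $\sum_{i}d_{i}^{*}(\lambda^{*};x_{i})=s^{*}$, i.e. the clearing condition. These are exactly the three defining properties of a competitive equilibrium. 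Conversely, if $\{\lambda^{*},s^{*},d_{i}^{*}\}$ is a competitive equilibrium, then each agent's and the supplier's optimality conditions are the per-subproblem KKT conditions, and the clearing condition supplies primal feasibility of the balance constraint with $\lambda^{*}$ as its multiplier; assembling these yields the full KKT system of~(\ref{eq:social}), which by convexity certifies that $(\{d_{i}^{*}\},s^{*})$ is globally optimal for the social welfare problem.

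The main obstacle I anticipate is not the separability argument itself but the verification of the constraint qualification and the clean identification of the dual multiplier with the market clearing price. I would need to confirm that condition~(\ref{eq:ctrl}) indeed produces a Slater point so that strong duality holds and $\lambda^{*}$ exists, and to check that the reduced per-agent constraint set (the intersection of $\mathcal{D}_{i}$ with the preimage of $\mathcal{X}_{i}$ under the dynamics) coincides exactly with the feasible set of the consumer problem~(\ref{eq:utility}), so that the decomposed subproblems are literally~(\ref{eq:utility}) and~(\ref{eq:cost}) rather than merely analogous to them.
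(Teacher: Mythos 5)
Your proposal is correct and takes essentially the same route as the paper, whose proof is simply the standard argument of checking that the KKT conditions of the social welfare problem~(\ref{eq:social}) coincide with those of the individual problems~(\ref{eq:utility}) and~(\ref{eq:cost}); your partial-Lagrangian decomposition with the multiplier $\lambda$ on the balance constraint is exactly that argument written out in full. Your worry about the constraint qualification resolves easily: since every constraint in~(\ref{eq:social}) is affine, feasibility (which the controllability condition~(\ref{eq:ctrl}) guarantees) already suffices for strong duality, so no strict-interiority argument is needed.
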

The proof of the above lemma follows the standard argument by checking
that the Karush\textendash Kuhn\textendash Tucker (KKT) conditions
of~(\ref{eq:social}) is equivalent to that of~(\ref{eq:utility})
and~(\ref{eq:cost}), see for example~\cite{HaoCorbinKalsiEtAl2017}. 

Note that the social welfare problem~(\ref{eq:social}) has to be
solved at each market period after the update of the DERs' energy
states. The DER dynamics~(\ref{eq:sys}) with the feedback of the
optimal solution $d_{i}^{*},\,\forall i\in\mathcal{M}$ can be viewed
as a one-horizon model predictive control (MPC) system, or alternatively,
as a system with optimization-based controller~\cite{Primbs2001,HeathWills2005,HeathLi2008}.
It is well known that such systems could be unstable even when the
open loop systems are stable~\cite{Maciejowski2000}. Although various
analytical conditions are proposed in the literature to guarantee
the closed-loop stability of the MPC control system~\cite{MayneRawlingsRaoEtAl2000},
they are generally not applicable to the system considered in this
paper involving the market clearing process~(\ref{eq:social}). In
fact, even the numerical verification of the closed-loop stability~(\ref{eq:GCL})
with quadratic $v_{i}$ and $c$ can be very challenge~\cite{Primbs2001,HeathWills2005,HeathLi2008}.
These works usually assume the knowledge of the equilibrium, which
is usually the origin. In addition, the numerical conditions proposed
there do not scale with the number of the DERs in the market. In the
next subsection, we will work with the quadratic utility and cost
functions and obtain simple analytical conditions to guarantee the
closed-loop system stability and hence the market stability. These
conditions will provide valuable insight into market dynamics as well
as guidance on the design of the bidding functions.

\subsection{Closed-loop DER Dynamics}

We assume that the utility function $v_{i}$ is a general quadratic
function of the power consumption $d_{i}$,
\begin{equation}
v_{i}(d_{i},x_{i})=-\frac{1}{2}q_{i}d_{i}^{2}+(r_{i}x_{i}+c_{i})d_{i},\,\forall i\in\mathcal{M},\label{eq:quad_utility}
\end{equation}
where $q_{i}>0$, $r_{i},\,c_{i}\in\mathbb{R}$ are user-specified
parameters reflecting their preferences. Such quadratic utility functions
have been widely used in coordinating demand-side electric loads via
mean-field game approaches, see for example~\cite{GrammaticoPariseColombinoEtAl2016}
and the references therein. As discussed in the previous section,
the bidding function of the $i$th consumer is given by the optimal
solution of~(\ref{eq:utility}). It can be easily verified that it
is the projection of the optimizer of the unconstrained problem onto
the constrain set, which is
\begin{equation}
d_{i}^{*}(\lambda)=\text{Proj}_{(\mathcal{X}_{i}-a_{i}x_{i})\cap\mathcal{D}_{i}}\left[\frac{-\lambda+r_{i}x_{i}+c_{i}}{q_{i}}\right],\label{eq:bids}
\end{equation}
where the non-emptiness of $(\mathcal{X}_{i}-a_{i}x_{i})\cap\mathcal{D}_{i}$
is guaranteed by the controllability condition~(\ref{eq:ctrl}).

A typical bidding function of the form~(\ref{eq:bids}) is depicted
in Fig.~\ref{fig:BidsCurve}. It contains a linear region in-between
the saturated regions. In the figure, there are two threshold prices
denoted by $\lambda_{\text{th}}^{1}$ and $\lambda_{\text{th}}^{2}$
. Similar bidding function has also been considered in~\cite{li2015}.
Note that the dependence on the load's state models the time-varying
power demand of the consumer. 

The cost function is assumed to be
\begin{equation}
c(s)=\frac{1}{2}\beta_{1}s^{2}+\beta_{2}s,\label{eq:quadcost}
\end{equation}
where $\beta_{i}>0$, $i=1,2$. Then the market price which is given
by the marginal cost is $\lambda=\beta_{1}s+\beta_{2}$.

Using~(\ref{eq:quad_utility}) and~(\ref{eq:quadcost}), we rewrite
the social welfare problem~(\ref{eq:social}) in vector form as follows,
\begin{equation}
\begin{array}{ll}
\underset{d,s}{\mbox{maximize}} & -\frac{1}{2}d^{T}Qd+d^{T}(Rx+c)-\frac{1}{2}\beta_{1}s^{2}-\beta_{2}s\\
\mbox{subject to:} & x^{+}=Ax+d,\,x^{+}\in\mathcal{X},\,d\in\mathcal{D},\\
 & {\bf 1}_{m}^{T}d=s,
\end{array}\label{eq:quad_social}
\end{equation}
where $d,\,x,\,c\in\mathbb{R}^{m}$ and their $i$th components are
$d_{i},\,x_{i},\,c_{i}$, respectively. The diagonal matrices $A,\,Q,\,R$
are generated by the vector $a=\{a_{i}\}$, $q=\{q_{i}\}$, and $r=\{r_{i}\}$
respectively. The state and input constraints are defined by $\mathcal{X}=\Pi_{i=1}^{m}\mathcal{X}_{i}$,
$\mathcal{D}=\Pi_{i=1}^{m}\mathcal{D}_{i}$. 

Without the inequality constraints, the optimal solution to~(\ref{eq:quad_social}),
which we shall refer to as the unconstrained maximizer, can be easily
obtained as 
\begin{equation}
\hat{d}(x)=\tilde{Q}^{-1}(Rx+\tilde{c}),\label{eq:unconsd}
\end{equation}
where $\tilde{Q}=Q+\beta_{1}{\bf 1}_{m}{\bf 1}_{m}^{T}$ and $\tilde{c}=c-\beta_{2}{\bf 1}_{m}$.
The constrained optimal solution can be expressed using $\hat{d}(x)$
conveniently, which is given in the following lemma.
\begin{lem}
The optimal solution to~(\ref{eq:quad_social}) is given by 
\begin{equation}
d^{*}(x)=\text{\emph{Proj}}_{(\mathcal{X}-Ax)\cap\mathcal{D}}^{\tilde{Q}}\hat{d}(x).\label{eq:dproj}
\end{equation}
\end{lem}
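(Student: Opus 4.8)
The plan is to reduce the constrained quadratic program~(\ref{eq:quad_social}) to a single projection by first eliminating the supply variable $s$ and then completing the square in the $\tilde{Q}$-weighted norm. First I would use the demand-supply balance constraint ${\bf 1}_{m}^{T}d=s$ to substitute $s={\bf 1}_{m}^{T}d$ directly into the objective. Observing that $({\bf 1}_{m}^{T}d)^{2}=d^{T}{\bf 1}_{m}{\bf 1}_{m}^{T}d$, the two quadratic terms merge into $-\frac{1}{2}d^{T}(Q+\beta_{1}{\bf 1}_{m}{\bf 1}_{m}^{T})d=-\frac{1}{2}d^{T}\tilde{Q}d$, while the linear terms combine into $d^{T}(Rx+c-\beta_{2}{\bf 1}_{m})=d^{T}(Rx+\tilde{c})$. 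This collapses the problem into a maximization purely over $d$ with objective $-\frac{1}{2}d^{T}\tilde{Q}d+d^{T}(Rx+\tilde{c})$.

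Next I would recast the feasible region. The coupled constraints $x^{+}=Ax+d$ and $x^{+}\in\mathcal{X}$ are equivalent to the single requirement $d\in\mathcal{X}-Ax$; intersecting with $d\in\mathcal{D}$ yields the feasible set $(\mathcal{X}-Ax)\cap\mathcal{D}$. Since $\mathcal{X}$ and $\mathcal{D}$ are products of intervals, this set is a box, hence closed and convex, and its non-emptiness is guaranteed by the controllability condition~(\ref{eq:ctrl}) exactly as noted below~(\ref{eq:bids}).

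The core step is the completion of the square. I would first verify $\tilde{Q}\succ0$: the diagonal matrix $Q$ is positive definite because each $q_{i}>0$, and $\beta_{1}{\bf 1}_{m}{\bf 1}_{m}^{T}\succeq0$ with $\beta_{1}>0$, so $\tilde{Q}=Q+\beta_{1}{\bf 1}_{m}{\bf 1}_{m}^{T}\succ0$ and is invertible, which is what makes $\hat{d}(x)=\tilde{Q}^{-1}(Rx+\tilde{c})$ and the norm $\left\Vert \cdot\right\Vert _{\tilde{Q}}$ well defined. Using $\tilde{Q}\hat{d}(x)=Rx+\tilde{c}$, a direct expansion gives $-\frac{1}{2}\left\Vert d-\hat{d}(x)\right\Vert _{\tilde{Q}}^{2}=-\frac{1}{2}d^{T}\tilde{Q}d+d^{T}(Rx+\tilde{c})-\frac{1}{2}\hat{d}(x)^{T}\tilde{Q}\hat{d}(x)$, so the reduced objective equals $-\frac{1}{2}\left\Vert d-\hat{d}(x)\right\Vert _{\tilde{Q}}^{2}$ up to an additive constant independent of $d$.

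Finally I would conclude: maximizing the objective over $(\mathcal{X}-Ax)\cap\mathcal{D}$ is equivalent to minimizing $\left\Vert d-\hat{d}(x)\right\Vert _{\tilde{Q}}^{2}$ over that set, which is precisely the definition of $\text{Proj}_{(\mathcal{X}-Ax)\cap\mathcal{D}}^{\tilde{Q}}\hat{d}(x)$ in the Hilbert space $\mathcal{H}_{\tilde{Q}}$, yielding~(\ref{eq:dproj}); uniqueness of the minimizer follows from strict convexity ($\tilde{Q}\succ0$) over the nonempty closed convex feasible set. I do not anticipate a genuine obstacle here, as the argument is essentially algebraic; the only points requiring care are confirming $\tilde{Q}\succ0$ so that the $\tilde{Q}$-projection is well defined and unique, and correctly rewriting the coupled dynamic constraint as the set membership $d\in(\mathcal{X}-Ax)\cap\mathcal{D}$.
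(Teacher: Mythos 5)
Your proposal is correct and is exactly the ``standard proof'' the paper invokes by citation (the paper's own proof is a one-line reference to Lemma~4 of~\cite{GrammaticoPariseColombinoEtAl2016} rather than a written argument): eliminate $s$ via the balance constraint, absorb the supply cost into $\tilde{Q}=Q+\beta_{1}{\bf 1}_{m}{\bf 1}_{m}^{T}$ and $\tilde{c}=c-\beta_{2}{\bf 1}_{m}$, rewrite the dynamic constraint as $d\in(\mathcal{X}-Ax)\cap\mathcal{D}$, and complete the square so that maximization becomes projection of $\hat{d}(x)$ in $\mathcal{H}_{\tilde{Q}}$. Your added care about $\tilde{Q}\succ0$ and the non-emptiness of the feasible box via condition~(\ref{eq:ctrl}) makes the argument self-contained where the paper defers to the literature.
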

\begin{proof}
It follows the standard proof as those in for example~\cite[Lemma 4]{GrammaticoPariseColombinoEtAl2016}. 
\end{proof}
Note that the feedback policy~(\ref{eq:dproj}) is nonlinear due
to the projection. Overall, the closed-loop system becomes 
\begin{equation}
x^{+}=Ax+d^{*}(x).\label{eq:closedloop}
\end{equation}

In particular, it can be shown that~(\ref{eq:closedloop}) is a piecewise
linear system~\cite{BemporadMorariDuaEtAl2002}. We first observe
that the existence of an equilibrium of~(\ref{eq:closedloop}) is
simply guaranteed by the controllability condition~(\ref{eq:ctrl}).
\begin{prop}
\label{prop:uniquess}The closed-loop system~(\ref{eq:closedloop})
has an equilibrium $x^{*}\in\mathcal{X}$ if the controllability condition~(\ref{eq:ctrl})
holds.
\end{prop}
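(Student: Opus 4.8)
The plan is to obtain the equilibrium as a fixed point of the closed-loop map and to invoke Brouwer's fixed point theorem. Define $F:\mathcal{X}\to\mathbb{R}^{m}$ by $F(x)=Ax+d^{*}(x)$, with $d^{*}(x)$ the projection in~(\ref{eq:dproj}); a point $x^{*}\in\mathcal{X}$ is an equilibrium of~(\ref{eq:closedloop}) exactly when $F(x^{*})=x^{*}$. Since $\mathcal{X}=\Pi_{i=1}^{m}[\underline{x}_{i},\bar{x}_{i}]$ is a nonempty compact convex set, it will suffice to establish three facts: (i) $F$ is well-defined on $\mathcal{X}$; (ii) $F(\mathcal{X})\subseteq\mathcal{X}$; and (iii) $F$ is continuous. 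Brouwer's theorem then guarantees a fixed point $x^{*}\in\mathcal{X}$.

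For (i) and (ii) I would first observe that the controllability condition~(\ref{eq:ctrl}) in fact guarantees the nonemptiness of $(\mathcal{X}-Ax)\cap\mathcal{D}$ for \emph{every} $x\in\mathcal{X}$, not just at an isolated state. Componentwise, $(\mathcal{X}_{i}-a_{i}x_{i})\cap\mathcal{D}_{i}$ is an intersection of two intervals, which is nonempty iff $\underline{x}_{i}-a_{i}x_{i}\le\bar{d}_{i}$ and $\underline{d}_{i}\le\bar{x}_{i}-a_{i}x_{i}$; since $a_{i}>0$, the worst cases occur at $x_{i}=\underline{x}_{i}$ and $x_{i}=\bar{x}_{i}$, and there these reduce precisely to the two inequalities in~(\ref{eq:ctrl}). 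Hence $d^{*}(x)$, and therefore $F$, is well-defined on all of $\mathcal{X}$. The invariance~(ii) is then immediate: by definition of the projection, $d^{*}(x)\in(\mathcal{X}-Ax)\cap\mathcal{D}\subseteq\mathcal{X}-Ax$, so $F(x)=Ax+d^{*}(x)\in\mathcal{X}$.

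The remaining and principal obstacle is the continuity~(iii) of $F$, i.e.\ the continuity of the \emph{parametric} projection $d^{*}(x)$. The difficulty is twofold: the feasible set $(\mathcal{X}-Ax)\cap\mathcal{D}$ itself moves with $x$, so one cannot simply invoke nonexpansiveness of a fixed projection, and because $\tilde{Q}=Q+\beta_{1}{\bf 1}_{m}{\bf 1}_{m}^{T}$ is non-diagonal the projection does not decouple across components. I would handle this by noting that the $i$th feasible interval is $[\max(\underline{x}_{i}-a_{i}x_{i},\underline{d}_{i}),\,\min(\bar{x}_{i}-a_{i}x_{i},\bar{d}_{i})]$, whose endpoints are continuous functions of $x$ (maxima and minima of affine maps) and, by the nonemptiness just shown, satisfy lower $\le$ upper throughout $\mathcal{X}$. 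Thus $x\mapsto(\mathcal{X}-Ax)\cap\mathcal{D}$ is a Hausdorff-continuous set-valued map with nonempty compact convex values. Since $\hat{d}(x)$ in~(\ref{eq:unconsd}) is affine and the $\tilde{Q}$-projection is the unique minimizer of the strictly convex, jointly continuous objective $\|\hat{d}(x)-z\|_{\tilde{Q}}^{2}$ over this set, Berge's maximum theorem yields upper hemicontinuity of the argmin, which uniqueness upgrades to genuine continuity of $d^{*}(x)$. Consequently $F$ is continuous, and Brouwer's fixed point theorem delivers an $x^{*}\in\mathcal{X}$ with $x^{*}=Ax^{*}+d^{*}(x^{*})$, which is the desired equilibrium.
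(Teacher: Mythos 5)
Your proof is correct, but it takes a genuinely different route from the paper. The paper does not use a fixed-point theorem at all: it substitutes the equilibrium condition $x^{+}=x$ directly into the social welfare program~(\ref{eq:quad_social}), eliminating $d=(I-A)x$ and $s={\bf 1}_{m}^{T}(I-A)x$, and observes that the resulting quadratic program over $x$ has a solution as soon as its constraint set $\{x\in\mathcal{X}:\,(I-A)x\in\mathcal{D}\}$ is nonempty, which is exactly what the controllability condition~(\ref{eq:ctrl}) guarantees. That route is much shorter and purely optimization-theoretic, but it silently identifies the maximizer of this ``steady-state'' welfare program with a fixed point of the closed-loop map $x\mapsto Ax+d^{*}(x)$; because of the coupling term $d^{T}Rx$, the first-order conditions of the reduced program pick up an extra term $Rd$ relative to the conditions defining $d^{*}(x)$ at fixed $x$, so this identification is not automatic when $R\neq0$, and the paper leaves it unjustified. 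Your Brouwer-based argument works directly with the definition of the closed-loop equilibrium and therefore never needs that identification: you pay with heavier machinery (Hausdorff continuity of the constraint correspondence, Berge's maximum theorem, Brouwer), but each step is verifiable, and along the way you make explicit two facts the paper uses only implicitly --- that~(\ref{eq:ctrl}) renders $(\mathcal{X}-Ax)\cap\mathcal{D}$ nonempty for \emph{every} $x\in\mathcal{X}$ (your corner-case analysis, which is essentially the single computation the paper invokes), and that $\mathcal{X}$ is invariant under the closed-loop dynamics. In short, the paper's proof is more economical but rests on an unproven reduction that is delicate precisely because of the state--consumption coupling $R$, while yours is longer, self-contained, and robust to that coupling.
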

\begin{proof}
The equilibria of $x^{+}=Ax+d$ satisfy $(I-A)x=d$. Set $x^{+}=x$
in~(\ref{eq:quad_social}), and replace $s$ with ${\bf 1}_{m}^{T}d={\bf 1}_{m}^{T}(I-A)x$.
It can be seen that the convex quadratic programming (possibly degenerated)
has a solution if the constraints are non-empty, and the latter can
be found to be exactly the controllability condition~(\ref{eq:ctrl}).
\end{proof}
\begin{rem}
Note that we can further pose conditions to guarantee the uniqueness
of the solution to~(\ref{eq:quad_social}) based on the KKT conditions.
This is particularly for the degenerated case where the quadratic
term is not strongly convex. However, we will not complicate the discussion
here, since our stability results in the sequel will guarantee the
uniqueness of the equilibrium. 
\end{rem}
We will investigate the market stability through analyzing the stability
of the discrete nonlinear system~(\ref{eq:closedloop}). Specifically,
we will find conditions on the utility and the cost functions such
that~(\ref{eq:closedloop}) is exponentially stable. In particular,
this will guarantee no limit cycle exists. This is desired since the
limit cycle corresponds to the oscillation of the power consumption
and the high volatility of the market clearing price. Denote by $x^{+}=T(x)$
the discrete time nonlinear dynamics~(\ref{eq:closedloop}). If we
can show that $T$ is a contraction mapping on $\mathcal{X}$, then
it immediately suggests that there exists a unique equilibrium in
$\mathcal{X}$ such that it is exponentially stable. In the case of
$a_{i}\in(0,1)$, the globally exponential stability can be concluded
in view of the policy~(\ref{eq:outside}). Next, we will first consider
the single DER case, and then extend the results to the multiple DER
case. 

\subsection{Single DER}

This scenario arises frequently when an aggregate DER model is obtained
by the coordinator to facilitate the power planning, see for example~\cite{RoozbehaniDahlehMitter2012,ZhaoHaoZhang2016,ZhaoZhangHaoEtAl2017,Hao2015}.
Such a model represents the aggregate power flexibility of the DER
population. It has the same form of~(\ref{eq:sys}) but with aggregate
power consumption as its input, and has much larger state and input
constraint sets. For clarity, we will denote by (scalar) $a$ the
system matrix $A$ in this section. First, we have the following characterization
of the mapping $T$.
\begin{lem}
\label{lem:ddoubleP}Consider the market consisting of one aggregate
DER model with $m=1$. Then the nonlinear mapping $T$ of the closed-loop
system dynamics~(\ref{eq:closedloop}) is given by 
\begin{equation}
\text{\emph{Proj}}_{\mathcal{X}}\left[ax+\emph{Proj}_{\mathcal{D}}\left[\hat{d}(x)\right]\right],\label{eq:newExp}
\end{equation}
where $\hat{d}(x)$ is defined in~(\ref{eq:unconsd}).
\end{lem}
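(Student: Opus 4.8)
The plan is to derive~(\ref{eq:newExp}) directly from the closed-form projection~(\ref{eq:dproj}) of Lemma~2, by repeatedly exploiting two elementary properties of the scalar projection onto an interval: its translation invariance and its behaviour under intersections. Specializing Lemma~2 to $m=1$, note that $\tilde{Q}=q_{1}+\beta_{1}>0$ is a positive scalar, so the $\tilde{Q}$-weighted projection onto an interval minimizes $\tilde{Q}\,(y-\hat{d}(x))^{2}$ and hence has the same minimizer as the ordinary Euclidean projection. Therefore $d^{*}(x)=\text{Proj}_{(\mathcal{X}-ax)\cap\mathcal{D}}[\hat{d}(x)]$, and the closed-loop map~(\ref{eq:closedloop}) reads $T(x)=ax+\text{Proj}_{(\mathcal{X}-ax)\cap\mathcal{D}}[\hat{d}(x)]$, with $\hat{d}(x)$ as in~(\ref{eq:unconsd}).

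First I would push the additive term $ax$ inside the projection. For any scalar $c$ and closed interval $S$, the scalar projection (clamping) satisfies $c+\text{Proj}_{S}[u]=\text{Proj}_{c+S}[c+u]$, while translation distributes over intersection, $c+(A\cap B)=(c+A)\cap(c+B)$. Taking $c=ax$, $A=\mathcal{X}-ax$, and $B=\mathcal{D}$, and using $ax+(\mathcal{X}-ax)=\mathcal{X}$, yields
\begin{equation*}
T(x)=\text{Proj}_{\mathcal{X}\cap(ax+\mathcal{D})}\bigl[ax+\hat{d}(x)\bigr].
\end{equation*}

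The crux of the argument is the following nested-projection identity for two closed intervals $I_{1},I_{2}\subset\mathbb{R}$ with $I_{1}\cap I_{2}\neq\emptyset$:
\begin{equation*}
\text{Proj}_{I_{2}}\bigl[\text{Proj}_{I_{1}}[v]\bigr]=\text{Proj}_{I_{1}\cap I_{2}}[v].
\end{equation*}
I would prove this by a short case analysis on the position of $v$ relative to the intersection $[\max(\underline{I}_{1},\underline{I}_{2}),\,\min(\bar{I}_{1},\bar{I}_{2})]$: when $v$ lies inside the intersection both sides return $v$; when $v$ lies below (resp. above) it, both the nested clamp and the intersection clamp return the lower (resp. upper) endpoint of the intersection. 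The non-emptiness of $I_{1}\cap I_{2}$ is precisely what validates this identity, and here it is guaranteed by the controllability condition~(\ref{eq:ctrl}), which ensures that $(\mathcal{X}-ax)\cap\mathcal{D}$, equivalently $\mathcal{X}\cap(ax+\mathcal{D})$, is non-empty.

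To finish, I would apply the identity with $I_{1}=ax+\mathcal{D}$, $I_{2}=\mathcal{X}$, and $v=ax+\hat{d}(x)$, and invoke translation invariance once more to write $\text{Proj}_{ax+\mathcal{D}}[ax+\hat{d}(x)]=ax+\text{Proj}_{\mathcal{D}}[\hat{d}(x)]$. This collapses the expression into $\text{Proj}_{\mathcal{X}}\bigl[ax+\text{Proj}_{\mathcal{D}}[\hat{d}(x)]\bigr]$, which is exactly~(\ref{eq:newExp}). I expect the nested-projection identity to be the only non-routine step; the remainder is bookkeeping with the translation rule, and the main subtlety worth flagging is that the identity can fail without the non-emptiness guaranteed by~(\ref{eq:ctrl}).
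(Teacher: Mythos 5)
Your proposal is correct and is essentially the paper's own argument in modular form: the paper makes the identical first observation that the scalar $\tilde{Q}$-weighted projection coincides with the Euclidean one, and then proves the same double-projection identity~(\ref{eq:doubleProj}), $ax+\text{Proj}_{(\mathcal{X}-ax)\cap\mathcal{D}}\,d=\text{Proj}_{\mathcal{X}}\left[ax+\text{Proj}_{\mathcal{D}}\,d\right]$, directly by the same kind of case analysis on scalar clamps (inner point inside $\mathcal{X}$ versus outside). Your factorization into translation invariance plus the nested-projection identity $\text{Proj}_{I_{2}}\circ\text{Proj}_{I_{1}}=\text{Proj}_{I_{1}\cap I_{2}}$ for overlapping intervals is just a cleaner packaging of that same case analysis, and you correctly flag non-emptiness via the controllability condition~(\ref{eq:ctrl}) as the hypothesis that makes it valid, which the paper uses implicitly.
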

\begin{proof}
First note that for scalar $\tilde{Q}=q+\beta_{1}>0$, it is easy
to verify that $\text{Proj}_{(\mathcal{X}-ax)\cap\mathcal{D}}^{\tilde{Q}}$
is equivalent to $\text{Proj}_{(\mathcal{X}-ax)\cap\mathcal{D}}$,
i.e., the projection in the $\mathbb{R}^{m}$. Then we need to show
that for arbitrary $d$, the following holds.

\begin{equation}
ax+\text{Proj}_{(\mathcal{X}-ax)\cap\mathcal{D}}d=\text{Proj}_{\mathcal{X}}\left[ax+\text{Proj}_{\mathcal{D}}d\right].\label{eq:doubleProj}
\end{equation}

If $ax+\text{Proj}_{\mathcal{D}}d\in\mathcal{X}$, then $\text{Proj}_{\mathcal{D}}d\in\mathcal{X}-ax$,
and $\text{Proj}_{\mathcal{D}}d=\text{Proj}_{(\mathcal{X}-ax)\cap\mathcal{D}}d$.
Hence, the above equality holds. Otherwise, if $ax+\text{Proj}_{\mathcal{D}}d\notin\mathcal{X}$,
we assume without loss of generality that $ax+\text{Proj}_{\mathcal{D}}d>\bar{x}$.
Then the right hand side of~(\ref{eq:doubleProj}) is $\bar{x}$.
Since $\text{Proj}_{\mathcal{D}}d>\bar{x}-ax$, we have $\text{Proj}_{(\mathcal{X}-ax)\cap\mathcal{D}}d=\bar{x}-ax,$
and therefore the left hand side is also $\bar{x}$. Thus we proved~(\ref{eq:doubleProj})
and also~(\ref{eq:newExp}).
\end{proof}
Under the help of Lemma~\ref{lem:ddoubleP}, we have the following
theorem for the market stability. 
\begin{thm}
\label{thm:OnePlayer}Consider the power market consisting of only
one DER $(m=1)$ and suppose $a\in(0,1]$, $q,\,\beta_{1}>0$, $r\in\mathbb{R}$.
Then the closed-loop system~(\ref{eq:closedloop}) is exponentially
stable in $\mathcal{X}$ if the following condition holds 
\begin{equation}
\left|a+\frac{r}{q+\beta_{1}}\right|<1.\label{eq:onecon}
\end{equation}
\end{thm}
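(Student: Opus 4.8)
The plan is to exhibit a single contraction estimate that covers the whole range $a\in(0,1]$. By Proposition~\ref{prop:uniquess} an equilibrium $x^\ast\in\mathcal X$ exists, and by Lemma~\ref{lem:ddoubleP} the map is $T(x)=\mathrm{Proj}_{\mathcal X}\!\big[h(x)\big]$ with $h(x):=ax+\mathrm{Proj}_{\mathcal D}[\hat d(x)]$, where $\hat d(x)=(rx+\tilde c)/(q+\beta_1)$ is affine with slope $\gamma:=r/(q+\beta_1)$. I would study $V(x):=|x-x^\ast|$ and show $V(T(x))\le\rho\,V(x)$ for a uniform $\rho<1$ on $\mathcal X$. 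The first step peels off the outer projection: since $\mathrm{Proj}_{\mathcal X}$ is non-expansive on $\mathbb R$ and $x^\ast=\mathrm{Proj}_{\mathcal X}[h(x^\ast)]$ is the equilibrium identity, $V(T(x))=\big|\mathrm{Proj}_{\mathcal X}[h(x)]-\mathrm{Proj}_{\mathcal X}[h(x^\ast)]\big|\le|h(x)-h(x^\ast)|$, so it suffices to bound the increment of $h$.

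The second step computes an effective slope for $h$. The interval projection $\phi:=\mathrm{Proj}_{\mathcal D}\circ\hat d$ is monotone and piecewise affine, with a.e.\ derivative $\gamma$ on the linear branch (where $\hat d\in\mathrm{int}\,\mathcal D$) and $0$ on the saturated branches; hence $\phi(x)-\phi(x^\ast)=\gamma\ell\,\mathrm{sgn}(x-x^\ast)$, where $\ell\in[0,|x-x^\ast|]$ is the length of the portion of the segment between $x^\ast$ and $x$ on which $\hat d$ stays interior to $\mathcal D$. Writing $\ell=\theta|x-x^\ast|$ with $\theta\in[0,1]$ gives $|h(x)-h(x^\ast)|=|a+\gamma\theta|\,|x-x^\ast|$. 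Because $a+\gamma\theta=(1-\theta)a+\theta(a+\gamma)$ is a convex combination of $a\in(0,1]$ and $a+\gamma$, condition~\eqref{eq:onecon} yields $|a+\gamma\theta|\le\max\{a,|a+\gamma|\}\le 1$; when $a\in(0,1)$ this maximum is already strictly below $1$, so $V$ contracts with $\rho=\max\{a,|a+\gamma|\}$ and exponential stability follows at once (and, via the policy~\eqref{eq:outside}, globally).

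The hard part is the boundary case $a=1$: there the estimate only gives $\rho\le 1$, since a step taken entirely on a saturated branch (so $\theta=0$) makes $h$ an isometry and $T$ merely non-expansive. I would close this gap by bounding $\theta$ away from $0$. The equilibrium forces $d^\ast(x^\ast)=(1-a)x^\ast=0$, and the controllability condition~\eqref{eq:ctrl} gives $\underline d<0<\bar d$, so $\hat d(x^\ast)=0$ lies strictly inside $\mathcal D$; let $\delta_0>0$ be the distance from $x^\ast$ to the nearest threshold where $\hat d$ leaves $\mathcal D$. Then every segment emanating from $x^\ast$ spends its first $\min\{|x-x^\ast|,\delta_0\}$ of length in the linear branch, whence $\theta\ge\theta_0:=\delta_0/(\bar x-\underline x)>0$ on the bounded interval $\mathcal X$. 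Since $1+\gamma\theta=(1-\theta)\cdot 1+\theta(1+\gamma)$ with $\theta\ge\theta_0$ and $1+\gamma<1$ is a strict convex combination, it is bounded above by $1+\gamma\theta_0<1$ and below by $1+\gamma>-1$, so $\rho=\max\{|1+\gamma\theta_0|,|1+\gamma|\}<1$. Thus $V(T(x))\le\rho\,V(x)$ holds uniformly on $\mathcal X$ for all $a\in(0,1]$, which establishes exponential stability; applying the same bound to any second equilibrium shows $x^\ast$ is unique. The crux is precisely this last step: exploiting that the equilibrium sits strictly inside the linear region of the bid, together with the boundedness of $\mathcal X$, to keep the effective slope below one when $a=1$.
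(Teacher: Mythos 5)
Your argument for $a\in(0,1)$ is correct and is essentially the paper's own proof viewed ``toward the equilibrium'': the paper uses Lemma~\ref{lem:ddoubleP} plus non-expansiveness of interval projections and splits on the sign of $r$ to get $\left|T(x)-T(y)\right|\leq\max\{a,\,|a+\gamma|\}\left|x-y\right|$ for arbitrary pairs, while your effective-slope bookkeeping ($a+\gamma\theta$ as a convex combination of $a$ and $a+\gamma$) reproduces the same constant $\rho=\max\{a,\,|a+\gamma|\}$ in one stroke, comparing only against $x^{*}$. Your diagnosis of the boundary case is also accurate, and it is sharper than the paper: at $a=1$ the paper's final step claims $\max\{-a-\gamma,\,a\}(x-y)<x-y$, which is false since that maximum equals $1$; indeed $T$ need not be a strict contraction on $\mathcal{X}$ at all when $a=1$ (e.g.\ when $\hat{d}\geq\bar{d}$ on a large part of $\mathcal{X}$, $T$ acts as $x\mapsto\min\{x+\bar{d},\bar{x}\}$, an isometry on pairs away from the upper boundary). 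So the soft spot you chose to attack is real.

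However, your repair of the case $a=1$ has a genuine gap: from $d^{*}(x^{*})=(1-a)x^{*}=0$ you infer $\hat{d}(x^{*})=0$, but $d^{*}(x^{*})$ is the \emph{projection} of $\hat{d}(x^{*})$ onto $(\mathcal{X}-x^{*})\cap\mathcal{D}$, and a nonzero $\hat{d}(x^{*})$ also projects to $0$ whenever $x^{*}$ is an endpoint of $\mathcal{X}$, because the state constraint then truncates the feasible interval exactly at $0$. This situation is compatible with all hypotheses of the theorem. Take $a=1$, $\mathcal{X}=[0,1]$, $\mathcal{D}=[-1,1]$, $q=\beta_{1}=1$, $r=-1$ (so $\gamma=-1/2$ and~(\ref{eq:onecon}) holds), and $c,\beta_{2}$ chosen so that $\hat{d}(x)=-x/2+2$; condition~(\ref{eq:ctrl}) holds as well. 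Then $\text{Proj}_{\mathcal{D}}\hat{d}\equiv1$ on $\mathcal{X}$, the unique equilibrium is $x^{*}=\bar{x}=1$ with $\hat{d}(x^{*})=3/2\notin\mathcal{D}$, your $\theta$ vanishes identically, your $\delta_{0}$ is not positive, and $\left|h(x)-h(x^{*})\right|=\left|x-x^{*}\right|$ exactly, so no contraction survives once the outer projection is peeled off. The theorem's conclusion still holds in this example (here $T\equiv\bar{x}$, so every trajectory is absorbed in one step), but the stabilization is produced precisely by the outer $\text{Proj}_{\mathcal{X}}$, which your first step discards via non-expansiveness. To close $a=1$ you would need either to keep the outer projection inside the estimate, or to split cases: if $\hat{d}(x^{*})$ lies in the interior of $\mathcal{D}$ your $\delta_{0}$-argument goes through (this covers all interior equilibria), and otherwise $x^{*}$ is a boundary equilibrium toward which the state drifts monotonically by at least $\min\{\bar{d},-\underline{d}\}$ per step until clamped, giving uniform finite-time absorption and hence exponential stability. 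As written, your proof --- like the paper's --- is complete only on the range $a\in(0,1)$.
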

\begin{proof}
By Lemma~\ref{lem:ddoubleP}, the operator $T=\text{Proj}_{\mathcal{\mathcal{X}}}\left[ax+\text{Proj}_{\mathcal{D}}\hat{d}(x)\right]$,
where $\hat{d}(x)=rx/(q+\beta_{1})+\tilde{c}$. Without loss of generality,
assume $x>y$. We then discuss the two cases $r\geq0$ and $r<0$.

Case 1: $r\geq0$. Then $x>y$ implies $\hat{d}(x)\geq\hat{d}(y)$
and it follows that $\text{Proj}_{\mathcal{D}}\hat{d}(x)\geq\text{Proj}_{\mathcal{D}}\hat{d}(y)$,
$ax+\text{Proj}_{\mathcal{D}}\hat{d}(x)\geq ay+\text{Proj}_{\mathcal{D}}\hat{d}(y)$,
and $T(x)\geq T(y)$. Therefore,
\begin{align*}
\left|T(x)-T(y)\right| & =T(x)-T(y)\\
 & \leq a(x-y)+\text{Proj}_{\mathcal{D}}\hat{d}(x)-\text{Proj}_{\mathcal{D}}\hat{d}(y)\\
 & \leq a(x-y)+\hat{d}(x)-\hat{d}(y)\\
 & =(a+\frac{r}{q+\beta_{1}})(x-y)\\
 & <x-y,
\end{align*}
where the first and second inequalities are by the non-expansive property
of the projection operation~~\cite[Proposition 4.8]{BauschkeCombettes2011},
and the last inequality is by condition~(\ref{eq:onecon}).

Case 2: $r<0$. Then $x>y$ implies $\hat{d}(x)<\hat{d}(y)$ and it
follows that $\text{Proj}_{\mathcal{D}}\hat{d}(x)\leq\text{Proj}_{\mathcal{D}}\hat{d}(y)$.
Therefore,
\begin{align*}
\left|T(x)-T(y)\right| & =\left|a(x-y)+\text{Proj}_{\mathcal{D}}\hat{d}(x)-\text{Proj}_{\mathcal{D}}\hat{d}(y)\right|\\
 & \leq\max\Bigl\{\text{Proj}_{\mathcal{D}}\hat{d}(y)-\text{Proj}_{\mathcal{D}}\hat{d}(x)+a(y-x),\\
 & \quad a(x-y)\Bigr\}\\
 & \leq\max\left\{ \hat{d}(y)-\hat{d}(x)+a(y-x),\,a(x-y)\right\} \\
 & =\max\left\{ -a-\frac{r}{q+\beta_{1}},\,a\right\} (x-y)\\
 & <x-y,
\end{align*}
where the second inequality used the non-expansive property of the
projection operation, and the last inequality is by condition~(\ref{eq:onecon}).
Combing these two cases, we see that $T$ is a contraction mapping
on $\mathcal{X}$. Hence, there exists a unique equilibrium in $\mathcal{X}$
which is exponentially stable. Thus we showed that the system is exponentially
stable in $\mathcal{X}$ under condition~(\ref{eq:onecon}). 
\end{proof}
\begin{rem}
The condition~(\ref{eq:onecon}) is exact the sufficient and necessary
stability condition for the unconstrained closed loop system,
\[
x^{+}=Ax+\hat{d}(x).
\]

However, it is not necessary for the constrained closed-loop system~(\ref{eq:closedloop}).
It is easy to give examples for which the systems do not satisfy~(\ref{eq:onecon})
but are still globally exponential stable (with equilibrium on the
boundary point). Also note that for the energy storage DER with $a=1$
(i.e., no energy dissipation), the coupling coefficient $r$ must
be negative.
\end{rem}

\subsection{Multiple DERs}

For the case of multiple DERs, the situation is much more complicated
than the single DER case. The difficulty mainly stems from the weighted
projection $\text{Proj}_{(\mathcal{X}-Ax)\cap\mathcal{D}}^{\tilde{Q}}\hat{d}(x)$
(see~(\ref{eq:dproj})). Even though the projection set $(\mathcal{X}-Ax)\cap\mathcal{D}$
is decoupled in $\mathbb{R}^{m}$, the projection of $\hat{d}(x)$
are coupled among each components since $\tilde{Q}\succ0$ is not
diagonal. This reflects the fact that the optimal power consumption
of each DER will depend on those of other DERs. Clearly, this is a
result of the interaction between DERs through the market coordination.
Due to this coupling, we do not have the characterization of $T$
as in~(\ref{eq:newExp}). However, when the number of DERs is large,
they become weakly coupled. In fact, from the unconstrained maximizer
$\hat{d}(x)$ we have by the Sherman\textendash Morrison formula~\cite{Meyer2000}
that 
\begin{align*}
\hat{d}(x) & =\tilde{Q}^{-1}(Rx+\tilde{c})\\
 & =\left(I_{m}-\frac{\beta_{1}Q^{-1}{\bf 1}_{m}{\bf 1}_{m}^{T}}{1+\beta_{1}\sum_{i=1}^{m}q_{i}^{-1}}\right)Q^{-1}(Rx+\tilde{c})\\
 & =Q^{-1}Rx-\frac{\beta_{1}Q^{-1}{\bf 1}_{m}}{1+\beta_{1}w_{1}}x_{a}+c_{o},
\end{align*}
where $x_{a}=\sum_{i\in\mathcal{M}}\frac{r_{i}}{q_{i}}x_{i}$ is the
aggregate state, $w_{1}=\sum_{i=1}^{m}q_{i}^{-1}$, and $c_{o}$ is
a constant. Moreover, for the $j$th DER, the unconstrained power
can be expressed as 
\begin{align*}
\hat{d}_{j}(x) & =\frac{r_{j}}{q_{j}}x_{j}-\frac{\beta_{1}}{1+\beta_{1}w_{1}}\frac{x_{a}}{q_{j}}+c_{j}\\
 & =\frac{1}{q_{j}}\left(r_{j}x_{j}-\frac{\beta_{1}}{1+\beta_{1}w_{1}}\sum_{i\in\mathcal{M}}\frac{r_{i}}{q_{i}}x_{i}\right)+c_{j},
\end{align*}
where $c_{j}$ is the $j$th component of $c_{o}$. If $q_{i}^{-1}$
is not diminishing, we have as $m\rightarrow\infty$, $w_{1}\rightarrow\infty$.
Hence, the $j$th DER is only affected by the $i$th DER, $\forall i\neq j$,
by a diminishing factor of $\frac{\beta_{1}}{1+\beta_{1}w_{1}}\frac{r_{i}}{q_{i}}.$
This motivates us to approximate $\hat{d}(x)$ which couples over
all the DERs' state by a decoupled one, given by, 
\begin{equation}
\tilde{d}(x):=\Lambda^{-1}Rx+\tilde{Q}^{-1}\tilde{c},\label{eq:dtilde}
\end{equation}
where $\Lambda$ is diagonal. The diagonal matrix $\Lambda$ will
be chosen such that $\epsilon:=\left\Vert \tilde{Q}^{-1}-\Lambda^{-1}\right\Vert _{2}$
is sufficiently small. The following lemma gives a approximation matrix
$\Lambda$ and derives its approximation error bound.
\begin{lem}
\label{lem:errorBound}Choose the approximation matrix $\Lambda^{-1}$
as
\begin{equation}
\Lambda^{-1}=Q^{-1}-\frac{1}{2}\frac{\beta_{1}w_{2}}{1+\beta_{1}w_{1}}I_{m},\label{eq:Lambda}
\end{equation}
where $w_{j}=\sum_{i=1}^{m}q_{i}^{-j}$, for $j=1,2$. Then the approximation
error is 
\[
\epsilon\leq\frac{1}{2}\frac{\beta_{1}w_{2}}{1+\beta_{1}w_{1}}.
\]
\end{lem}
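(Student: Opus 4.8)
The plan is to form the error matrix $\tilde{Q}^{-1}-\Lambda^{-1}$ explicitly and then read off its spectral norm from the eigenvalues of a rank-one correction. First I would invoke the Sherman--Morrison identity already displayed above, namely
\[
\tilde{Q}^{-1}=Q^{-1}-\frac{\beta_1}{1+\beta_1 w_1}\,Q^{-1}{\bf 1}_m{\bf 1}_m^T Q^{-1}.
\]
Setting $u:=Q^{-1}{\bf 1}_m$ (the vector with entries $q_i^{-1}$) and $\gamma:=\frac{\beta_1}{1+\beta_1 w_1}>0$, this becomes $\tilde{Q}^{-1}=Q^{-1}-\gamma\,uu^T$, and I note for later that $\|u\|_2^2={\bf 1}_m^T Q^{-2}{\bf 1}_m=w_2$.

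Subtracting the proposed $\Lambda^{-1}$ from~(\ref{eq:Lambda}) cancels the common $Q^{-1}$ and leaves the clean expression
\[
\tilde{Q}^{-1}-\Lambda^{-1}=\gamma\Bigl(\tfrac{w_2}{2}I_m-uu^T\Bigr),
\]
so the whole problem reduces to bounding $\bigl\|\tfrac{w_2}{2}I_m-uu^T\bigr\|_2$.

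The key step is the spectral analysis of the rank-one term. Since $uu^T$ is symmetric positive semidefinite of rank one, its eigenvalues are $\|u\|_2^2=w_2$ (with eigenvector $u$) and $0$ (with multiplicity $m-1$ on $u^{\perp}$). Hence $\tfrac{w_2}{2}I_m-uu^T$ is symmetric with eigenvalues $\pm\tfrac{w_2}{2}$, and its spectral norm is exactly $\tfrac{w_2}{2}$. This yields $\epsilon=\gamma\,\tfrac{w_2}{2}=\tfrac12\,\beta_1 w_2/(1+\beta_1 w_1)$, in fact with equality.

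I expect no real obstacle, the argument being elementary linear algebra; the one point worth flagging is the origin of the constant $\tfrac12 w_2$ in~(\ref{eq:Lambda}). Among all scalar shifts $cI_m$, the choice $c=\tfrac{w_2}{2}$ is precisely the midpoint of the two extreme eigenvalues $\{0,w_2\}$ of $uu^T$, which minimizes $\|cI_m-uu^T\|_2$. This explains both the specific form of $\Lambda^{-1}$ and the fact that the resulting error bound is tight rather than a loose overestimate.
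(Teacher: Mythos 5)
Your proposal is correct and follows essentially the same route as the paper's proof: apply the Sherman--Morrison formula, reduce the error to $\frac{\beta_1}{1+\beta_1 w_1}\bigl\Vert \frac{1}{2}w_2 I_m - q^{-1}q^{-T}\bigr\Vert_2$, and use the rank-one spectral structure (eigenvalues $\{0, w_2\}$) to conclude the norm is $\frac{1}{2}w_2$. Your added observations --- that the bound holds with equality and that $\frac{w_2}{2}$ is the midpoint shift minimizing the spectral norm, which explains the form of $\Lambda^{-1}$ --- are accurate refinements the paper leaves implicit, but the core argument is identical.
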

\begin{proof}
Direct calculation yields 
\[
\left\Vert \tilde{Q}^{-1}-\Lambda^{-1}\right\Vert _{2}=\frac{\beta_{1}}{1+\beta_{1}w_{1}}\left\Vert \frac{1}{2}w_{2}I_{m}-q^{-1}q^{-T}\right\Vert _{2},
\]
where we denote by $q^{-1}$ the column vector $[q_{1}^{-1},...q_{m}^{-1}]^{T}$
and by $q^{-T}$ its transpose. Notice that since $q^{-1}q^{-T}$
is a rank one matrix, it has eigenvalues $\{0,\,w_{2}\}$. It follows
immediately that spectral radius of the symmetric matrix $\frac{1}{2}w_{2}I_{m}-q^{-1}q^{-T}$
is $\frac{1}{2}w_{2}$, and hence $\epsilon\leq\frac{1}{2}\frac{\beta_{1}w_{2}}{1+\beta_{1}w_{1}}$.
This completes the proof.
\end{proof}
Using Lemma~\ref{lem:errorBound}, we can easily obtain an error
bound for the limiting case of an arbitrary number of DERs. The following
corollary also reveals that this error bound can be arbitrarily small
if all $q_{i}'s$ are sufficiently large. 
\begin{cor}
Suppose $q_{i}^{-1}$ is not diminishing. Then the approximation error
$\epsilon$ is bounded by $\frac{1}{2}\max_{i\in\mathcal{M}}q_{i}^{-1}$.
\end{cor}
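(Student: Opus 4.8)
The plan is to obtain the claim as a direct consequence of Lemma~\ref{lem:errorBound} together with one elementary inequality for the ratio $w_2/w_1$. Since Lemma~\ref{lem:errorBound} already supplies $\epsilon\le\frac{1}{2}\frac{\beta_1 w_2}{1+\beta_1 w_1}$, all that remains is to bound the right-hand side by $\frac{1}{2}\max_{i\in\mathcal{M}}q_i^{-1}$, uniformly in the number of DERs $m$ and in the cost coefficient $\beta_1>0$.

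First I would discard the additive constant in the denominator: because $\beta_1>0$ and $w_1=\sum_{i=1}^m q_i^{-1}>0$, we have $1+\beta_1 w_1>\beta_1 w_1$, so
\[
\frac{\beta_1 w_2}{1+\beta_1 w_1}\le\frac{\beta_1 w_2}{\beta_1 w_1}=\frac{w_2}{w_1}.
\]
This step is what removes the dependence on $\beta_1$ and yields a bound that does not degrade with the supplier's cost. Next I would read $w_2/w_1$ as a $q_i^{-1}$-weighted average of the quantities $q_i^{-1}$: writing $w_2=\sum_{i}q_i^{-2}=\sum_i q_i^{-1}\cdot q_i^{-1}$ and bounding one factor $q_i^{-1}$ in each summand by $\max_{j}q_j^{-1}$ gives $w_2\le(\max_{j}q_j^{-1})\sum_i q_i^{-1}=(\max_{j}q_j^{-1})\,w_1$, hence $w_2/w_1\le\max_{i\in\mathcal{M}}q_i^{-1}$. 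Chaining the two estimates delivers $\epsilon\le\frac{1}{2}\max_{i\in\mathcal{M}}q_i^{-1}$.

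Since both steps are elementary, there is no real technical obstacle; the subtlety is purely conceptual. The derived bound is independent of $m$, so the ``limiting case of an arbitrary number of DERs'' requires no genuine passage to the limit --- the same inequality holds for every finite $m$. The hypothesis that $q_i^{-1}$ is not diminishing is therefore not needed to establish the inequality itself; it only ensures (via $w_1\to\infty$) that the decoupling approximation motivating $\tilde d(x)$ remains meaningful as $m$ grows. I would close by remarking that, consistent with the surrounding discussion, the right-hand side $\frac{1}{2}\max_i q_i^{-1}$ can be made arbitrarily small by choosing all $q_i$ large.
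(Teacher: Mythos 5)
Your proof is correct, and it rests on the same two ingredients as the paper's: Lemma~\ref{lem:errorBound} and the estimate $w_{2}=\sum_{i}q_{i}^{-1}\cdot q_{i}^{-1}\leq\bigl(\max_{j}q_{j}^{-1}\bigr)w_{1}$. The routes diverge only in how the factor involving $\beta_{1}$ is handled. The paper keeps the quantity $\tfrac{\beta_{1}w_{1}}{1+\beta_{1}w_{1}}<1$ intact, invokes the non-diminishing hypothesis to get $w_{1}\rightarrow\infty$, and concludes by letting $m\rightarrow\infty$, so its statement is genuinely a limiting one. You instead discard the additive $1$ via $1+\beta_{1}w_{1}>\beta_{1}w_{1}$, which cancels $\beta_{1}$ and gives $\epsilon\leq\tfrac{1}{2}\,w_{2}/w_{1}\leq\tfrac{1}{2}\max_{i}q_{i}^{-1}$ for every finite $m$ and every $\beta_{1}>0$, with no limit and no hypothesis on $q_{i}^{-1}$ at all. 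Your observation that the hypothesis is superfluous for the inequality itself is accurate and is a real (if modest) strengthening of the corollary as stated. What the paper's more careful bookkeeping buys in exchange is quantitative: retaining the factor $\tfrac{\beta_{1}w_{1}}{1+\beta_{1}w_{1}}$ shows the error is strictly below the limiting bound for finite $m$, and it supports the discussion immediately after the corollary that when $\beta_{1}$ is small relative to the $q_{i}$ (so that the constant $1$ dominates the denominator), $\epsilon$ is far smaller than $\tfrac{1}{2}\max_{i}q_{i}^{-1}$; your step $\tfrac{\beta_{1}w_{2}}{1+\beta_{1}w_{1}}\leq w_{2}/w_{1}$ is lossy in exactly that regime, so this refinement is invisible to your bound.
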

\begin{proof}
If $q_{i}^{-1}$ is not diminishing, then $w_{1}\rightarrow\infty$
as $m\rightarrow\infty$. It follows from Lemma~\ref{lem:errorBound}
that 
\begin{align*}
\epsilon & \leq\frac{1}{2}\frac{\beta_{1}w_{2}}{1+\beta_{1}w_{1}}\leq\frac{1}{2}\frac{\max_{i\in\mathcal{M}}q_{i}^{-1}\beta_{1}w_{1}}{1+\beta_{1}w_{1}},
\end{align*}
and the right hand side of last inequality goes to $\frac{1}{2}\max_{i\in\mathcal{M}}q_{i}^{-1}$
as $m\rightarrow\infty$. This completes the proof.
\end{proof}
It is worth mentioning that in practice, the coefficient $\beta_{1}$
will be relatively very small compare to $q_{i}$, since it denotes
the quadratic cost of the total supply cost. Hence, the constant $1$
will dominate the denominator and the error $\epsilon$ will be very
small even for a finite number of DERs.

Now the error bounds of using $\tilde{d}(x)$ to approximate $\hat{d}$
can be easily obtained by $\left\Vert \tilde{d}(x)-\hat{d}(x)\right\Vert _{2}\leq\epsilon\left\Vert x\right\Vert \max r_{i}$
after applying the matrix induced norm inequality. Note that since
$x\in(\mathcal{X}-Ax)\cap\mathcal{D}$ is bounded, this error bound
is diminishing as $\epsilon\rightarrow0$ if $r_{i}'$s are bounded. 

Based on the above results, next we will approximate the constrained
maximizer $d^{*}(x)$ in~(\ref{eq:dproj}) using $\tilde{d}(x)$
in~(\ref{eq:dtilde}). To avoid the cluster of notation, let $\Omega(x)$
denote the projection set $(\mathcal{X}-Ax)\cap\mathcal{D}$. Intuitively,
if $\epsilon$ is small, we may approximate $d^{*}(x)$ by $\text{Proj}_{\Omega(x)}^{\Lambda}\tilde{d}(x),$
that is, the projection of $\tilde{d}(x)$ onto $\Omega(x)$ in $\mathcal{H}_{\Lambda}$,
where recall that $\tilde{d}(x)$ is the approximation of $\hat{d}(x)$.
This approximation can be justified from the KKT conditions corresponding
to the projection $d^{*}(x)$. Note that by definition, 
\begin{equation}
d^{*}(x)=\arg\min_{z\in\Omega(x)}\left\Vert z-\tilde{d}(x)\right\Vert _{\tilde{Q}}.\label{eq:defdstar}
\end{equation}

The KKT conditions for the above minimization problem can be found
as,
\[
\begin{cases}
z=\tilde{d}(x)-\tilde{Q}^{-1}L^{T}\mu,\\
Lz=b(x),
\end{cases}
\]
where $Lz=b(x)$ are the active inequality constraints in $z\in\Omega(x)$,
and $\mu$ the associated Lagrangian multipliers. We see that the
optimal projection of $\tilde{d}(x)$ is an affine function of $\mu$
which depends explicitly on $\tilde{Q}^{-1}$. Hence, we will use
the following quantity to approximate $d^{*}(x)$, 
\begin{equation}
\tilde{d}^{*}:=\text{Proj}_{\Omega(x)}^{\Lambda}\tilde{d}(x)=\arg\min_{z\in\Omega(x)}\left\Vert z-\tilde{d}(x)\right\Vert _{\Lambda}.\label{eq:FinalD}
\end{equation}

It is worth mentioning that explicit solutions of the optimal primal
and dual solutions $z$ and $\mu$ of~(\ref{eq:defdstar}) can be
obtained, see for example~\cite{BemporadMorariDuaEtAl2002}. A more
thorough analysis based on these explicit solutions will be our future
work.

Now based on the approximation~(\ref{eq:FinalD}), we will prove
the following sufficient condition to guarantee the market stability.
The proof resembles that of the single DER case. The main point is
that the approximation $\tilde{d}^{*}$ is totally decoupled among
each DERs. 
\begin{thm}
\label{thm:Nplayer}Suppose $a_{i}\in(0,1],\,q_{i}>0,\,r_{i}\in\mathbb{R}$.
The closed-loop system~(\ref{eq:closedloop}) is exponentially stable
if for all $i\in\mathcal{M}$,
\begin{equation}
\left|a_{i}+\phi_{i}r_{i}\right|<1,\label{eq:Ncon}
\end{equation}
where $\phi_{i}$ is the $i$th diagonal element of $\Lambda^{-1}$
in~(\ref{eq:Lambda}), given by 
\[
\phi_{i}=q_{i}^{-1}-\frac{1}{2}\frac{\beta_{1}w_{2}}{1+\beta_{1}w_{1}},
\]
and $w_{j}:=\beta_{1}\sum_{j=1}^{m}q_{j}^{-j}$, for $j=1,2$. 
\end{thm}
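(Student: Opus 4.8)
The plan is to transplant the single-DER analysis to each coordinate by exploiting the decoupled structure of the approximation $\tilde{d}^{*}$ in~(\ref{eq:FinalD}), and then to transfer the resulting contraction back to the exact closed loop~(\ref{eq:closedloop}). First I would show that $\tilde{d}^{*}(x)=\text{Proj}_{\Omega(x)}^{\Lambda}\tilde{d}(x)$ is fully decoupled across the DERs. Two facts drive this. The constraint set $\Omega(x)=(\mathcal{X}-Ax)\cap\mathcal{D}=\prod_{i}\bigl((\mathcal{X}_{i}-a_{i}x_{i})\cap\mathcal{D}_{i}\bigr)$ is a Cartesian product of intervals, and $\Lambda$ is diagonal, so the $\Lambda$-weighted norm is separable across coordinates; consequently the weighted projection splits into $m$ independent scalar projections $\tilde{d}^{*}_{i}(x)=\text{Proj}_{(\mathcal{X}_{i}-a_{i}x_{i})\cap\mathcal{D}_{i}}[\tilde{d}_{i}(x)]$. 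Moreover, since $\Lambda^{-1}R$ is diagonal, $\tilde{d}_{i}(x)=\phi_{i}r_{i}x_{i}+(\tilde{Q}^{-1}\tilde{c})_{i}$ depends only on $x_{i}$, the constant being immaterial for the Lipschitz analysis. Here I would record the mild requirement $\phi_{i}>0$, i.e. $\Lambda\succ0$, which holds whenever $\beta_{1}$ is small relative to the $q_{i}$ as noted after the corollary.

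Next I would reduce each coordinate to the single-DER map. Applying the double-projection identity of Lemma~\ref{lem:ddoubleP} componentwise rewrites the approximate dynamics $x_{i}^{+}=a_{i}x_{i}+\tilde{d}^{*}_{i}(x)$ as
$\tilde{T}_{i}(x_{i})=\text{Proj}_{\mathcal{X}_{i}}\bigl[a_{i}x_{i}+\text{Proj}_{\mathcal{D}_{i}}[\phi_{i}r_{i}x_{i}+(\tilde{Q}^{-1}\tilde{c})_{i}]\bigr]$,
which is exactly the single-DER operator of Theorem~\ref{thm:OnePlayer} with the effective slope $\phi_{i}r_{i}$ in place of $r/(q+\beta_{1})$. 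Repeating the sign-casing argument of that proof and invoking the non-expansiveness of the scalar projections then yields $|\tilde{T}_{i}(x_{i})-\tilde{T}_{i}(y_{i})|\le\rho_{i}|x_{i}-y_{i}|$ with a factor $\rho_{i}<1$ under~(\ref{eq:Ncon}), precisely as in Theorem~\ref{thm:OnePlayer}. Because $\tilde{T}$ is decoupled, measuring $\mathcal{X}$ in the $\ell_{\infty}$ norm makes $\tilde{T}$ a contraction with factor $\rho=\max_{i}\rho_{i}<1$, so the approximate system has a unique, exponentially stable equilibrium.

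The hard part will be transferring this conclusion to the exact closed loop~(\ref{eq:closedloop}), since for finite $m$ the true feedback $d^{*}(x)$ differs from $\tilde{d}^{*}(x)$ by a quantity governed by $\epsilon=\|\tilde{Q}^{-1}-\Lambda^{-1}\|_{2}$. I would treat $T$ as a perturbation of $\tilde{T}$ and use a robustness-of-contraction argument: since $\rho$ is bounded strictly below $1$, the margin $1-\rho$ cannot be erased by a sufficiently small perturbation, so it suffices to bound the Lipschitz constant of $T-\tilde{T}$ by a multiple of $\epsilon$ and to invoke the fact that $\epsilon$ is small for large populations or large $q_{i}$ (Lemma~\ref{lem:errorBound} and the corollary). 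The delicate point is that controlling the magnitude $\|d^{*}(x)-\tilde{d}^{*}(x)\|$ is not enough; one must control how the projection discrepancy varies with $x$, i.e. its Lipschitz behavior, which is where the explicit primal-dual characterization of the projection mentioned after~(\ref{eq:FinalD}) would enter.
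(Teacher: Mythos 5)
Your first two paragraphs are, in substance, the paper's entire proof. The paper likewise observes that $\Lambda$ being diagonal plus the box structure of $\Omega(x)$ makes $\text{Proj}_{\Omega(x)}^{\Lambda}$ coincide with the ordinary componentwise projection, applies Lemma~\ref{lem:ddoubleP} coordinatewise to get $Ax+\text{Proj}_{\Omega(x)}\tilde{d}(x)=\text{Proj}_{\mathcal{X}}[Ax+\text{Proj}_{\mathcal{D}}\tilde{d}(x)]$, and then reruns the sign-casing of Theorem~\ref{thm:OnePlayer} in each coordinate; it packages the per-coordinate factors into a diagonal matrix $\Xi$ and concludes contraction in the Euclidean norm, whereas you use the $\ell_{\infty}$ norm --- an immaterial difference for a decoupled map. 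Your flagging of $\phi_{i}>0$ is a legitimate point of care that the paper leaves implicit: the case split on the sign of $r_{i}$ tacitly assumes it, and the weighted projection $\text{Proj}^{\Lambda}$ is only defined for $\Lambda\succ0$.

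Where you genuinely depart from the paper is your third paragraph, and the departure is in your favor: the paper never performs the transfer you are worrying about. Its proof simply \emph{defines} the closed-loop map as $T(x):=Ax+\text{Proj}_{\Omega(x)}^{\Lambda}\tilde{d}(x)$, i.e., it substitutes the approximation~(\ref{eq:FinalD}) for the true feedback $d^{*}(x)=\text{Proj}_{\Omega(x)}^{\tilde{Q}}\hat{d}(x)$ of~(\ref{eq:dproj}), proves that this approximate map is contractive, and stops. So, for the purpose of reproducing the paper's argument, you could delete your third paragraph entirely. As a matter of mathematics, however, your concern is exactly right, and note that your sketched repair cannot be closed under the theorem's stated hypotheses: condition~(\ref{eq:Ncon}) fixes a contraction margin $1-\rho$ for the approximate map, but nothing in the hypotheses makes $\epsilon=\Vert\tilde{Q}^{-1}-\Lambda^{-1}\Vert_{2}$ (let alone the Lipschitz constant of the discrepancy $x\mapsto d^{*}(x)-\tilde{d}^{*}(x)$, which is the quantity you correctly identify as the real obstacle) small relative to that margin; smallness of $\epsilon$ requires $m\rightarrow\infty$ or large $q_{i}$, neither of which is assumed. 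The paper implicitly concedes this by deferring, right after~(\ref{eq:FinalD}), a ``more thorough analysis'' based on the explicit primal--dual solutions to future work. In short: your proposal contains the paper's proof, plus an honest acknowledgment of the approximation gap that the paper elides; what remains unexecuted in your plan is unexecuted in the paper as well.
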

\begin{proof}
We will prove that the mapping $T(x):=Ax+\text{Proj}_{\Omega(x)}^{\Lambda}\tilde{d}(x)$
is contractive. 

First note that since $\Lambda$ is diagonal and $\Omega(x)$ is hyper
rectangular in $\mathbb{R}^{m}$, it can be easily shown by the definition
of the projection that $\text{Proj}_{\Omega(x)}^{\Lambda}\tilde{d}(x)=\text{Proj}_{\Omega(x)}\tilde{d}(x)$.
It then follows from the same argument of Lemma~\ref{lem:ddoubleP}
component-wisely that 
\begin{equation}
Ax+\text{Proj}_{\Omega(x)}\tilde{d}(x)=\text{Proj}_{\mathcal{X}}\left[Ax+\text{Proj}_{\mathcal{D}}\tilde{d}(x)\right].\label{eq:imm}
\end{equation}

Using~(\ref{eq:imm}) and the non-expansive property of the projection
operation~\cite[Proposition 4.8]{BauschkeCombettes2011}, we have
for arbitrary $x,\,y\in\mathcal{X}$, $x\neq y$, 
\begin{align*}
\left\Vert T(x)-T(y)\right\Vert _{2} & \leq\left\Vert A(x-y)+\text{Proj}_{\mathcal{D}}\tilde{d}(x)-\text{Proj}_{\mathcal{D}}\tilde{d}(x)\right\Vert _{2}.
\end{align*}

Now consider the $i$th element of the vector inside the norm on the
right hand side of the above inequality. Similar to the proof of Theorem~\ref{thm:OnePlayer},
without loss of generality, we assume $x_{i}>y_{i}$. If $r_{i}\geq0$,
we have by the non-expansiveness of the projection that,
\begin{multline}
0\leq a_{i}(x_{i}-y_{i})+\text{Proj}_{\mathcal{D}_{i}}\tilde{d}_{i}(x)-\text{Proj}_{\mathcal{D}_{i}}\tilde{d}_{i}(y)\\
\leq(a_{i}+\phi_{i}r_{i})(x_{i}-y_{i}),\label{eq:iterm1}
\end{multline}
where $\tilde{d}_{i}(x)$ is the $i$th element of $\tilde{d}(x)$
in~(\ref{eq:dtilde}). 

If $r_{i}<0$, then 
\begin{multline}
\left|a_{i}(x_{i}-y_{i})+\text{Proj}_{\mathcal{D}_{i}}\tilde{d}(x)-\text{Proj}_{\mathcal{D}_{i}}\tilde{d}(y)\right|\\
\leq\max\Bigl\{\text{Proj}_{\mathcal{D}_{i}}\tilde{d}_{i}(y)-\text{Proj}_{\mathcal{D}_{i}}\tilde{d}_{i}(x)+a(y_{i}-x_{i}),\,a_{i}(x_{i}-y_{i})\Bigr\}\\
\leq\max\left\{ \tilde{d}_{i}(y)-\tilde{d}_{i}(x)+a(y_{i}-x_{i}),\,a_{i}(x_{i}-y_{i})\right\} \\
=\max\left\{ -a_{i}-\phi_{i}r_{i},\,a_{i}\right\} (x_{i}-y_{i}),\label{eq:iterm2}
\end{multline}

Combining $(\ref{eq:iterm1})$ and (\ref{eq:iterm2}), we have
\begin{align*}
\left\Vert T(x)-T(y)\right\Vert _{2} & \leq\left\Vert \Xi(x-y)\right\Vert _{2},
\end{align*}
where $\Xi$ is a diagonal matrix whose $i$th element is given by
\[
\begin{cases}
a_{i}+\phi_{i}r_{i}, & \text{if \ensuremath{r_{i}\geq0},}\\
\max\left\{ -a_{i}-\phi_{i}r_{i},\,a_{i}\right\} , & \text{if }\ensuremath{r_{i}<0.}
\end{cases}
\]

Then by the assumption~(\ref{eq:Ncon}), 
\[
\left\Vert T(x)-T(y)\right\Vert _{2}\leq\left\Vert \Xi\right\Vert _{2}\left\Vert x-y\right\Vert _{2}<\left\Vert x-y\right\Vert _{2},
\]
which shows that $T$ is a contraction mapping on $\mathcal{X}$.
Hence, the closed-loop system is exponentially stable. Thus it completes
the proof.
\end{proof}
We see from the above derived condition that if $q_{i}^{-1}$ or $r_{i}$
is sufficiently small, we can always guarantee the stability of the
closed-loop system~(\ref{eq:closedloop}). The small $q_{i}^{-1}$
corresponds to a bidding function with a relatively shallower slope.
\begin{center}
\begin{table}
\caption{\label{tab:SimPara}Parameters for Unstable Market Dynamics}

\centering{}%
\begin{tabular}{ccc|ccc}
\multicolumn{1}{c}{} & \multicolumn{2}{c}{} & \multicolumn{1}{c}{} & \multicolumn{2}{c}{}\tabularnewline
\hline 
\hline 
\multirow{2}{*}{Param.} & \multicolumn{2}{c|}{Value} & \multirow{2}{*}{Param.} & \multicolumn{2}{c}{Value}\tabularnewline
\cline{2-3} \cline{5-6} 
 & $m=1$ & $m=100$ &  & $m=1$ & $m=100$\tabularnewline
\hline 
$a_{i}$ & $0.95$ & U$[0.95,\,0.9]$ & $x_{i}(0)$ & U$[\underline{x}_{i},\,\bar{x}_{i}]$ & U$[\underline{x}_{i},\,\bar{x}_{i}]$\tabularnewline
$x_{i}^{r}$ & - & U$[350,\,500]$ & $q_{i}$ & $0.005$ & $0.005$\tabularnewline
$\underline{x}_{i}$ & $2500$ & $x_{i}^{r}-200$ & $r_{i}$ & $-0.1a_{i}$ & $-2a_{i}$\tabularnewline
$\bar{x}_{i}$ & 7500 & $x_{i}^{r}+200$ & $c_{i}$ & 500 & $2x_{i}^{r}$\tabularnewline
$\underline{d}_{i}$ & $0$ & $0$ & $\beta_{1}$ & $0.04$ & $0.008$\tabularnewline
$\bar{d}_{i}$ & $500$ & U$[100,150]$ & $\beta_{2}$ & $20$ & $20$\tabularnewline
\hline 
\hline 
 &  & \multicolumn{1}{c}{} &  &  & \tabularnewline
\end{tabular}
\end{table}
\par\end{center}

\begin{center}
\begin{figure*}[!tp]
\begin{centering}
\subfloat[\label{subfig:Unstable1DPrice}Unstable price response]{\centering{}\includegraphics[bb=30bp 0bp 432bp 176bp,clip,width=0.45\linewidth]{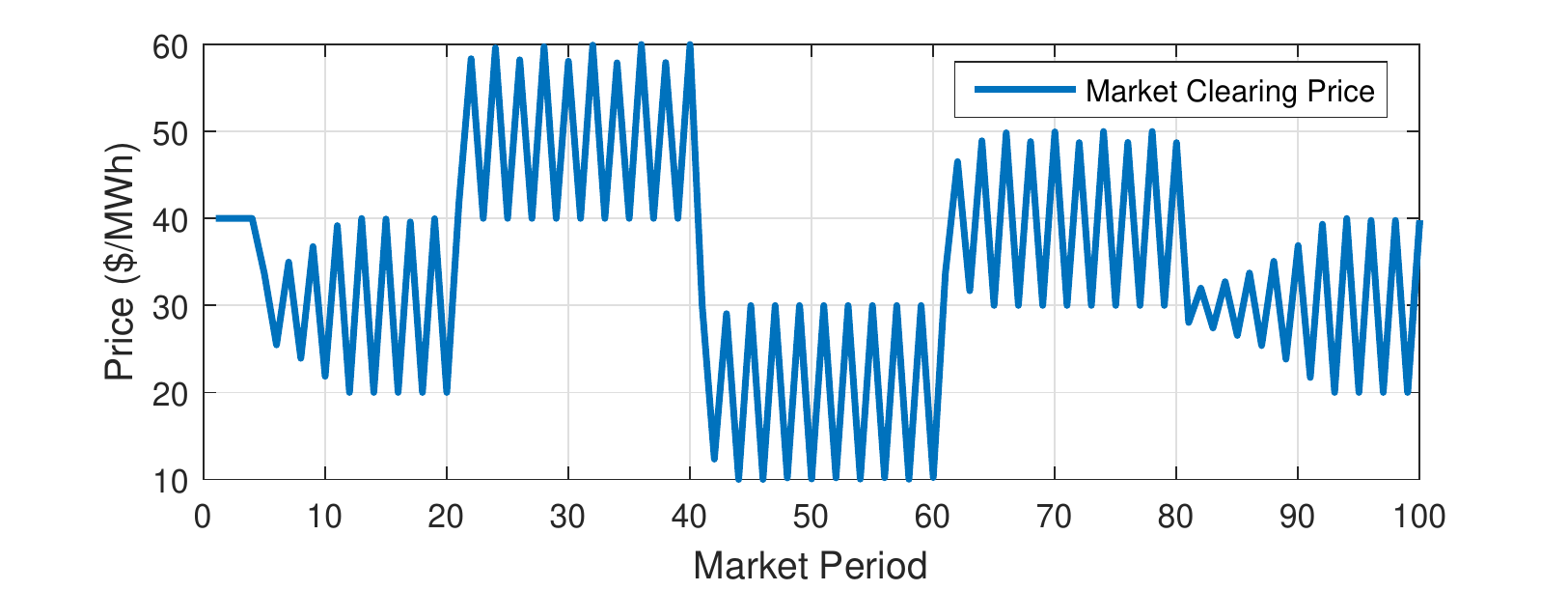}}\hfill{}\subfloat[\label{subfig:Unstable1DPower}Unstable aggregate power response]{\centering{}\includegraphics[bb=24bp 0bp 432bp 176bp,clip,width=0.46\linewidth]{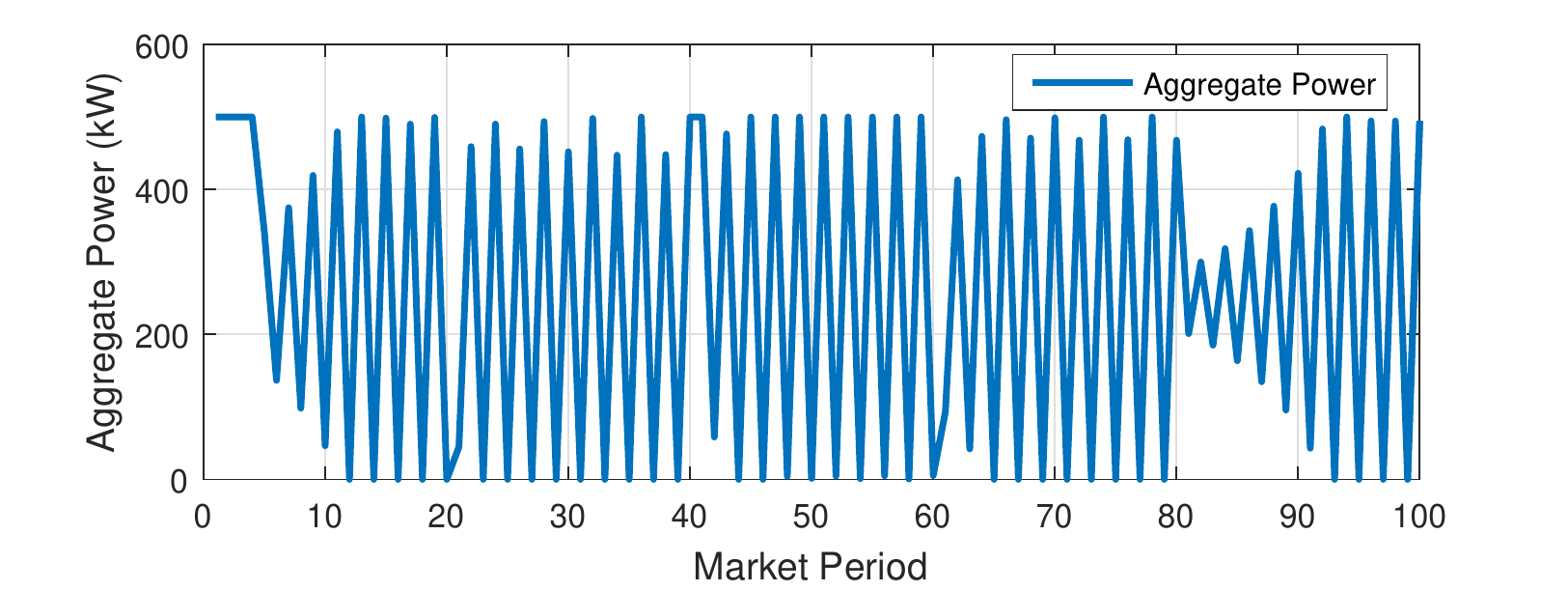}}
\par\end{centering}
\subfloat[\label{subfig:Stable1DPrice}Stable price response]{\centering{}\includegraphics[bb=28bp 0bp 432bp 176bp,clip,width=0.45\linewidth]{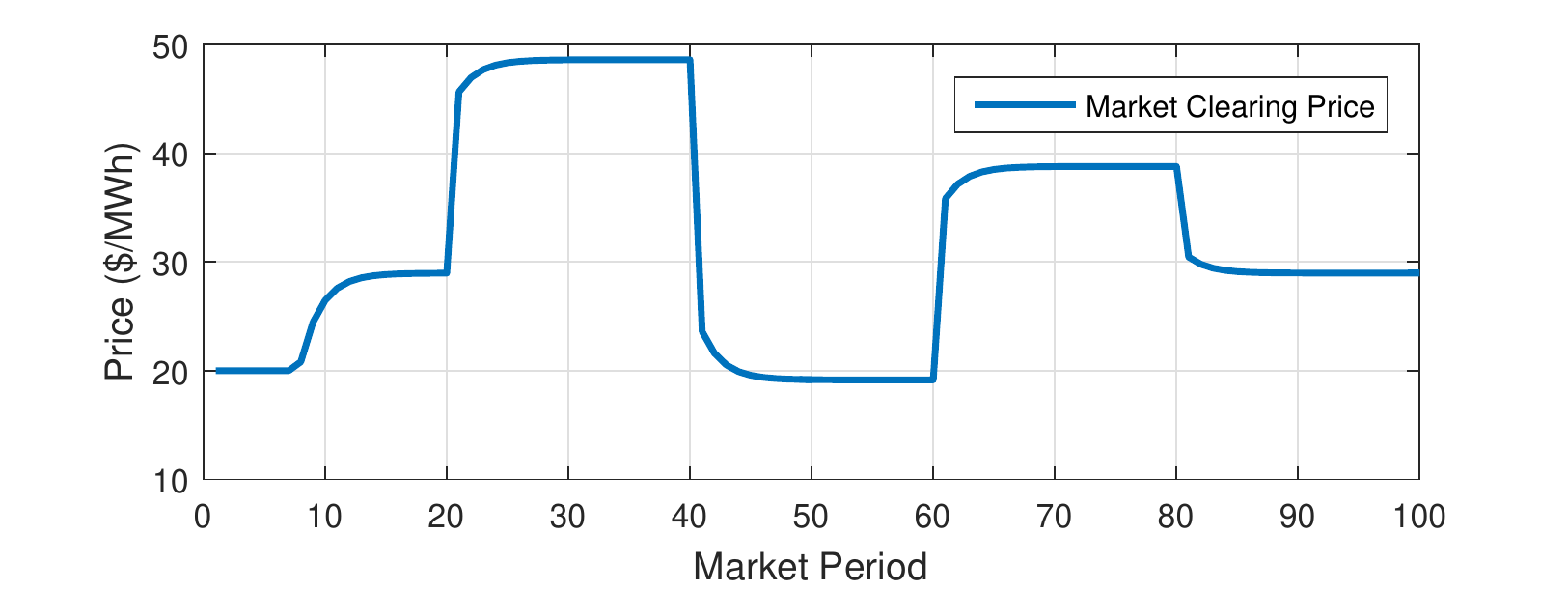}}\hfill{}\subfloat[\label{subfig:Stable1DPower}Stable aggregate power response]{\centering{}\includegraphics[bb=25bp 0bp 432bp 176bp,clip,width=0.45\linewidth]{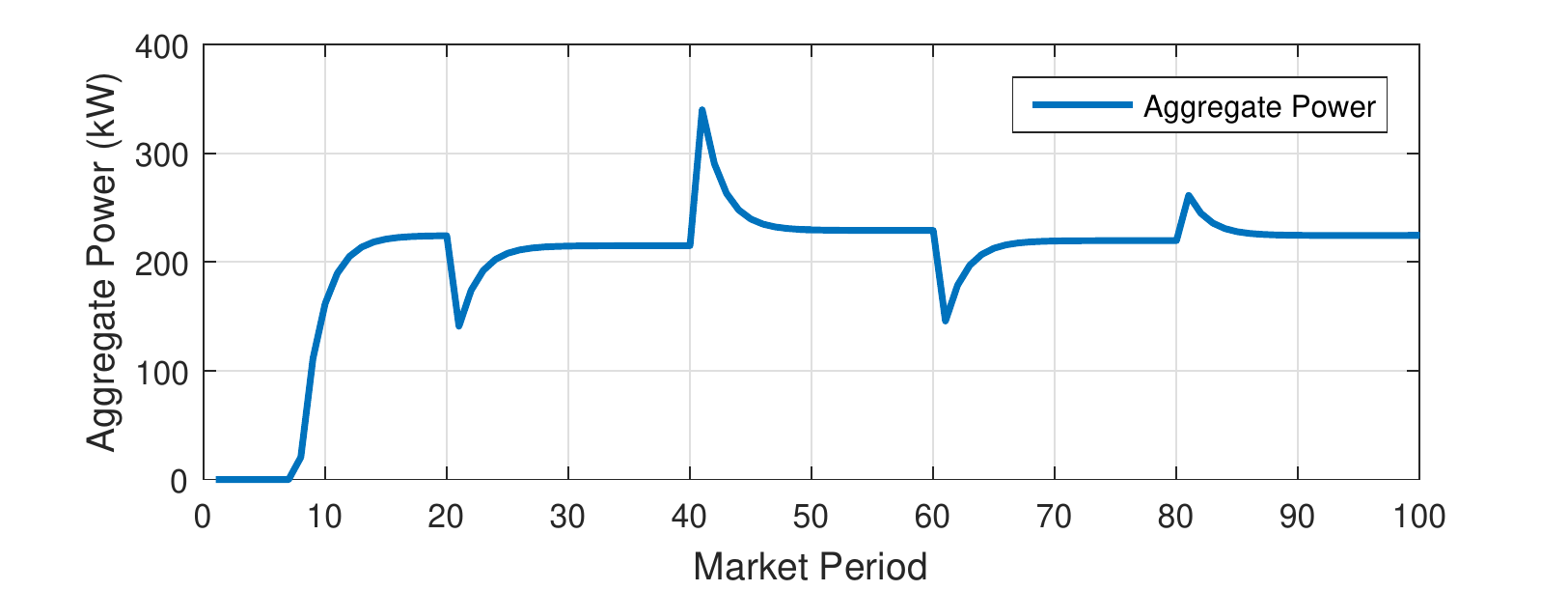}}

\caption{\label{fig:SingleDER}Single DER market under base price change}

\begin{centering}
\subfloat[\label{subfig:UnStableNDPrice}Unstable price response]{\begin{centering}
\includegraphics[bb=30bp 0bp 432bp 176bp,clip,width=0.45\linewidth]{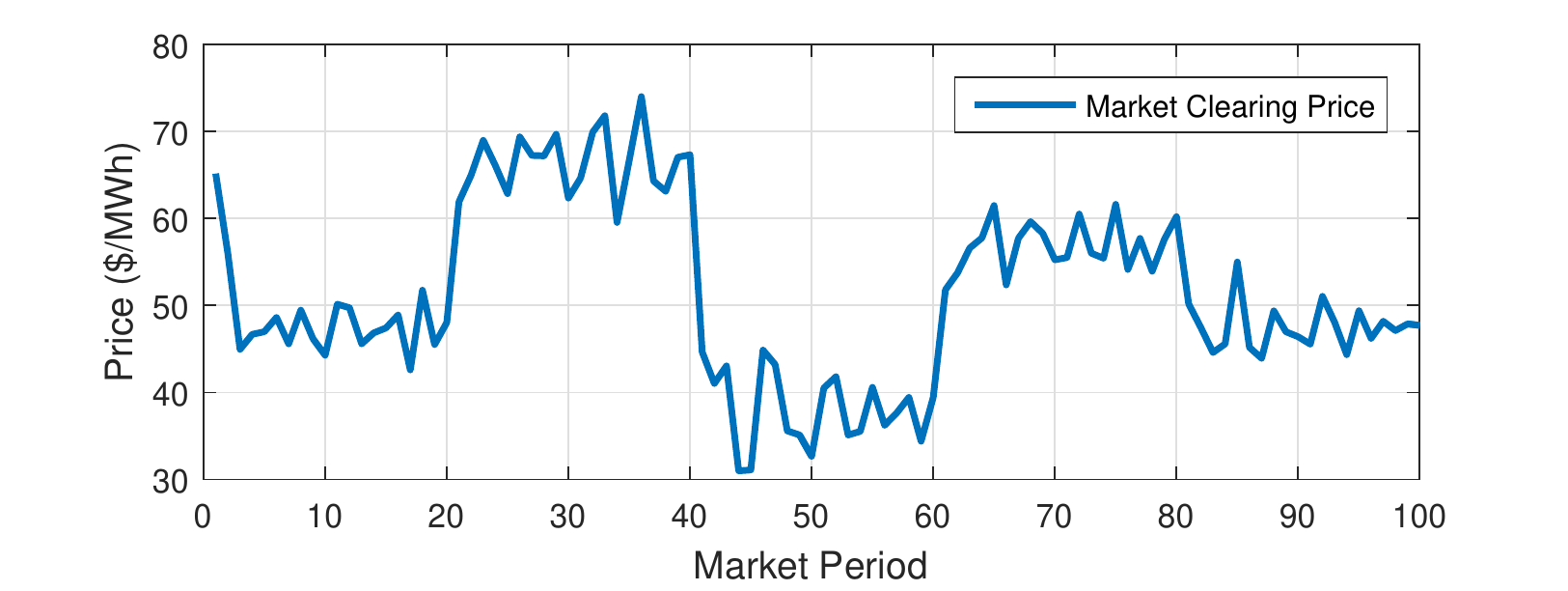}
\par\end{centering}
\centering{}}\hfill{}\subfloat[\label{subfig:UnStableNDPower}Unstable aggregate power response]{\centering{}\includegraphics[bb=35bp 0bp 432bp 176bp,clip,width=0.45\linewidth]{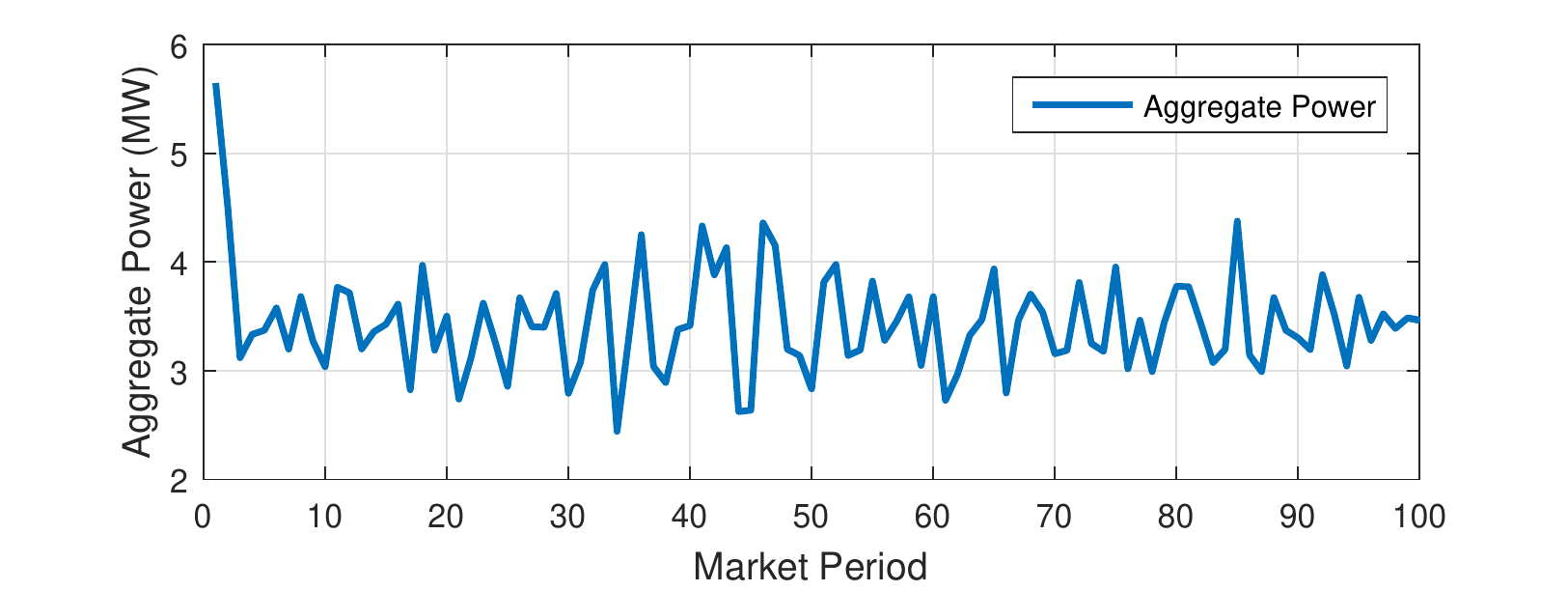}}
\par\end{centering}
\subfloat[\label{subfig:StableNDPrice}Stable price response]{\centering{}\includegraphics[bb=30bp 0bp 432bp 176bp,clip,width=0.45\linewidth]{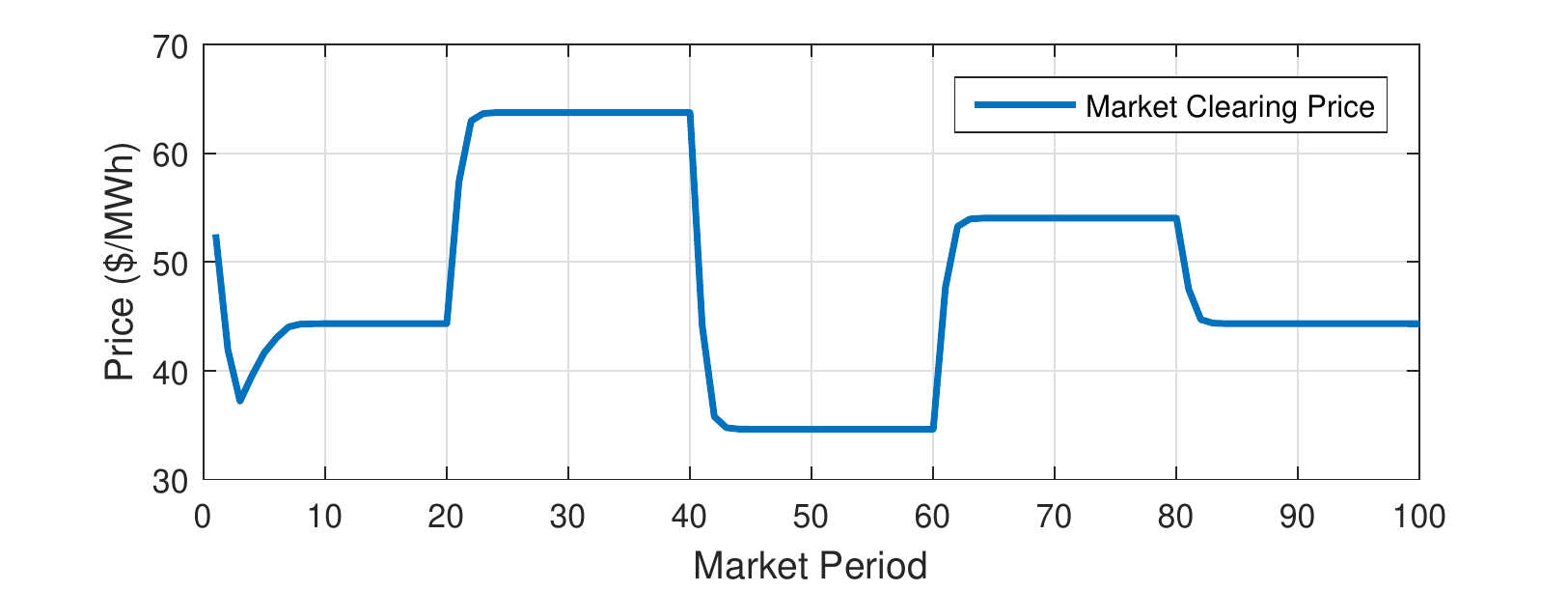}}\hfill{}\subfloat[\label{subfig:StableNDPower}Stable aggregate power response]{\centering{}\includegraphics[bb=27bp 0bp 432bp 176bp,clip,width=0.45\linewidth]{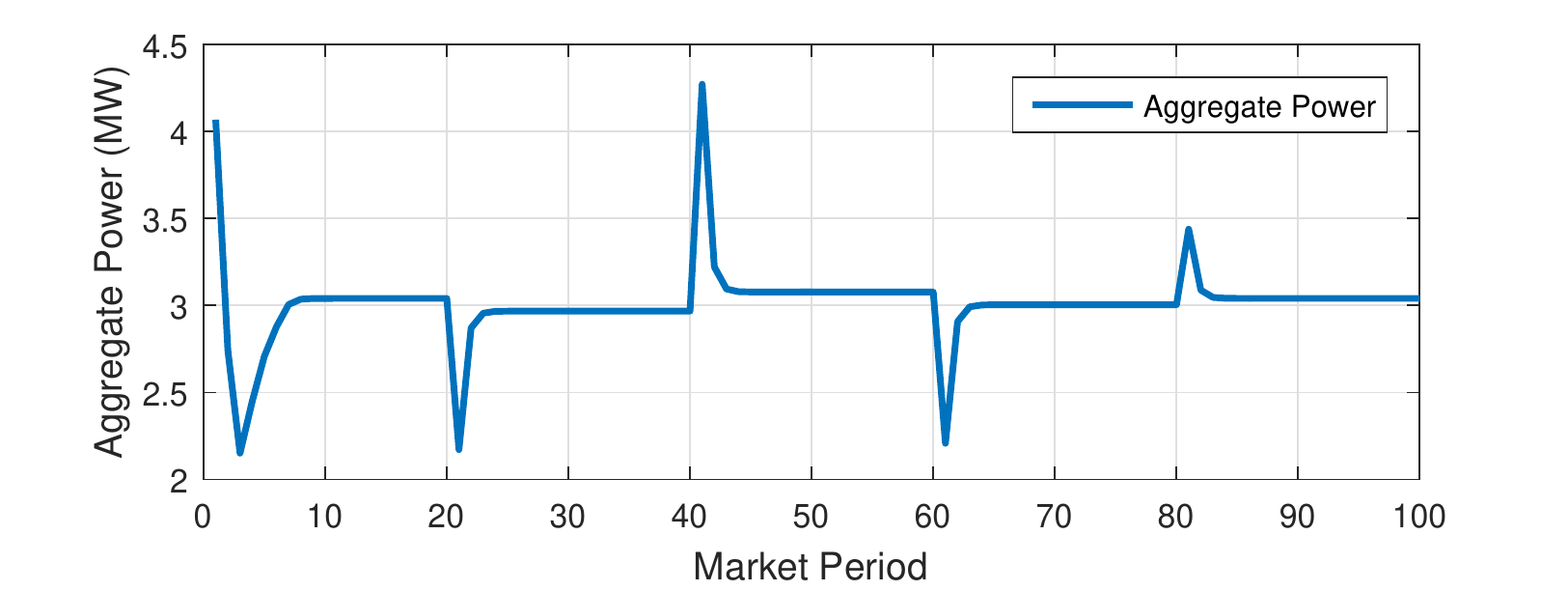}}

\caption{\label{fig:MultiDER}Multi-DER market under base price change}
\end{figure*}
\par\end{center}

\section{Numerical Examples}

In this section, we will present some examples to demonstrate the
application of the derived stability conditions. We consider a scenario
where the base price changes over time. Note that the base price corresponds
to the constant term $\beta_{2}$ in the marginal cost $\lambda=\beta_{1}s+\beta_{2},$
which can be influenced by the external wholesale market price.

\subsection{Single DER}

We first consider the market consists of only one aggregate DER. The
columns below $m=1$ in Table~\ref{tab:SimPara} give the simulation
parameters for the\emph{ unstable} market. In the simulation, the
base price $\beta_{2}$ is set to $\{20,40,10,30,20\}$ successively
every 20 market periods. The price and the aggregate power evolution
are depicted in Figs.~\ref{fig:SingleDER}\ref{subfig:Unstable1DPrice}-\ref{subfig:Unstable1DPower},
respectively. We can see that the resulting market clearing prices
and aggregate power keep oscillating. Moreover, the base price as
a coordinating signal fails to adjust the aggregate power consumption
effectively. Note that in this case, we have $q=0.005$ and the ratio
$a+\frac{r}{q+\beta_{1}}=-1.1611$, which violates the condition~(\ref{eq:onecon}). 

If we increase $q$ to $0.2$, then $a+\frac{r}{q+\beta_{1}}=0.5542$
and the condition~(\ref{eq:onecon}) is satisfied. The resulting
market clearing prices and aggregate power become stable in this case,
as can be seen from in Figs.~\ref{fig:SingleDER}\ref{subfig:Stable1DPrice}-\ref{subfig:Stable1DPower}.

\subsection{Multiple DERs}

We next assume that there are $100$ DERs modeled by~(\ref{eq:sys}).
The parameters for the system dynamics, utility functions, and the
cost functions are generated randomly according to the distribution/values
in the $m=100$ columns of Table~\ref{tab:SimPara}, where U$[h,l]$
denotes the uniform distribution within the range $[h,l]$, and the
variable $x_{i}^{r}$ is used to generate several other variables
in the table. The same base price change scenario in the single DER
case is simulated where $\beta_{2}$ is set to $\{20,40,10,30,20\}$
successively every 20 market periods. We solve the social welfare
problem~(\ref{eq:quad_social}) to obtain the market clearing prices
and energy allocation at each market period. As shown in Figs.~\ref{fig:MultiDER}\ref{subfig:UnStableNDPrice}-\ref{subfig:UnStableNDPower},
both the market clearing prices and the aggregate power are highly
volatile and oscillating with large amplitudes. In fact, the condition~(\ref{eq:Ncon})
are violated for all DERs with $a_{i}+\phi_{i}r_{i}\in(-190,-180)$.
We next keep the other parameters the same, and choose $q_{i}=1.5$
in order to stabilize the market. With this new $q_{i}$, the condition~(\ref{eq:Ncon})
is satisfied for all DERs, where $a_{i}+\phi_{i}r_{i}$'s are around
$-0.09$. The resulted market clearing price and aggregate demand
evolution are illustrated in Figs.~\ref{fig:MultiDER}\ref{subfig:StableNDPrice}-\ref{subfig:StableNDPower}.
We can see that the market converges to an equilibrium very quickly
within 10 market periods. It is worth mentioning that one could also
change $r_{i}$ to stabilize the market following the stability condition~(\ref{eq:Ncon}).
Our extensive simulation shows that condition~(\ref{eq:Ncon}) is
a very efficient certification for the market stability under multiple
heterogeneous DERs.

\section{Conclusions and Future Work}

This paper investigated the electricity market stability under dynamic
DER models. The individual DER was modeled by a scalar linear system
with both state and input constraints. Under the assumption of quadratic
utility and cost functions, we characterized the competitive equilibrium
of the market, and convert the market stability into a discrete nonlinear
system stability problem. The stability analysis of such systems is
very challenging in general. We derived analytical conditions to guarantee
the system stability via a contraction analysis approach. These conditions
implied that the market stability can be guaranteed by simply choosing
shallower slope or smaller coupling coefficient between individual
state and consumption. Numerical examples were provided to demonstrate
the application of the stability results.

Our future work includes further investigation of less conservative
conditions for the market stability and incorporating the feeder capacity
limits into the market model.

\bibliographystyle{IEEEtranS}
\bibliography{CDC2017}

\begin{thebibliography}{10}
\providecommand{\url}[1]{#1}
\csname url@rmstyle\endcsname
\providecommand{\newblock}{\relax}
\providecommand{\bibinfo}[2]{#2}
\providecommand\BIBentrySTDinterwordspacing{\spaceskip=0pt\relax}
\providecommand\BIBentryALTinterwordstretchfactor{4}
\providecommand\BIBentryALTinterwordspacing{\spaceskip=\fontdimen2\font plus
\BIBentryALTinterwordstretchfactor\fontdimen3\font minus
  \fontdimen4\font\relax}
\providecommand\BIBforeignlanguage[2]{{%
\expandafter\ifx\csname l@#1\endcsname\relax
\typeout{** WARNING: IEEEtran.bst: No hyphenation pattern has been}%
\typeout{** loaded for the language `#1'. Using the pattern for}%
\typeout{** the default language instead.}%
\else
\language=\csname l@#1\endcsname
\fi
#2}}

\bibitem{AEP}
\BIBentryALTinterwordspacing
``{AEP GridSmart Demonstration Project}.'' [Online]. Available:
  \url{https://www.smartgrid.gov/document/aep_ohio_gridsmart_demonstration_project}
\BIBentrySTDinterwordspacing

\bibitem{Alvarado1999}
F.~Alvarado, ``The stability of power system markets,'' \emph{{IEEE} Trans.
  Power Syst.}, vol.~14, no.~2, pp. 505--511, May 1999.

\bibitem{Alvarado1997}
F.~L. Alvarado, ``The dynamics of power system markets,'' Power Systems
  Engineering Research Consortium, Tech. Rep., Mar. 1997, pSERC-97-01.

\bibitem{BauschkeCombettes2011}
H.~H. Bauschke and P.~L. Combettes, \emph{Convex Analysis and Monotone Operator
  Theory in Hilbert Spaces}.\hskip 1em plus 0.5em minus 0.4em\relax Springer,
  2011.

\bibitem{BejestaniAnnaswamySamad2014}
A.~K. Bejestani, A.~Annaswamy, and T.~Samad, ``A hierarchical transactive
  control architecture for renewables integration in smart grids: Analytical
  modeling and stability,'' \emph{IEEE Trans. Smart Grid}, vol.~5, no.~4, pp.
  2054--2065, July 2014.

\bibitem{BemporadMorariDuaEtAl2002}
A.~Bemporad, M.~Morari, V.~Dua, and E.~N. Pistikopoulos, ``The explicit linear
  quadratic regulator for constrained systems,'' \emph{Automatica}, vol.~38,
  no.~1, pp. 3--20, Jan. 2002.

\bibitem{ChenLiLowEtAl2010}
L.~Chen, N.~Li, S.~H. Low, and J.~C. Doyle, ``Two market models for demand
  response in power networks,'' in \emph{2010 First IEEE International
  Conference on Smart Grid Communications}, Oct 2010, pp. 397--402.

\bibitem{FullerSchneiderChassin2011}
J.~C. Fuller, K.~P. Schneider, and D.~Chassin, ``Analysis of residential demand
  response and double-auction markets,'' in \emph{2011 IEEE Power and Energy
  Society General Meeting}, July 2011, pp. 1--7.

\bibitem{GrammaticoPariseColombinoEtAl2016}
S.~Grammatico, F.~Parise, M.~Colombino, and J.~Lygeros, ``Decentralized
  convergence to nash equilibria in constrained deterministic mean field
  control,'' \emph{{IEEE} Trans. Automat. Contr.}, vol.~61, no.~11, pp.
  3315--3329, Nov. 2016.

\bibitem{GrunePannek2011}
L.~Gr{\"{u}}ne and J.~Pannek, \emph{Nonlinear Model Predictive Control}.\hskip
  1em plus 0.5em minus 0.4em\relax Springer-Verlag London, 2011.

\bibitem{HaoCorbinKalsiEtAl2017}
H.~Hao, C.~D. Corbin, K.~Kalsi, and R.~G. Pratt, ``Transactive control of
  commercial buildings for demand response,'' \emph{{IEEE} Trans. Power Syst.},
  vol.~32, no.~1, pp. 774--783, 2017.

\bibitem{Hao2015}
H.~Hao, B.~Sanandaji, K.~Poolla, and T.~Vincent, ``Aggregate flexibility of
  thermostatically controlled loads,'' \emph{{IEEE} Trans. Power Syst.},
  vol.~30, no.~1, pp. 189--198, Jan. 2015.

\bibitem{HeathWills2005}
W.~P. Heath and A.~G. Wills, ``Zames-falb multipliers for quadratic
  programming,'' in \emph{Proceedings of the 44th IEEE Conference on Decision
  and Control}, Dec 2005, pp. 963--968.

\bibitem{HeathLi2008}
W.~P. Heath and G.~Li, ``Improved multipliers for input-constrained model
  predictive control,'' \emph{IFAC Proceedings Volumes}, vol.~41, no.~2, pp.
  15\,160--15\,165, 2008.

\bibitem{HuLin2001}
T.~Hu and Z.~Lin, ``A complete stability analysis of planar discrete-time
  linear systems under saturation,'' \emph{IEEE Transactions on Circuits and
  Systems I: Fundamental Theory and Applications}, vol.~48, no.~6, pp.
  710--725, Jun 2001.

\bibitem{HuLin2001a}
------, \emph{Control Systems with Actuator Saturation}.\hskip 1em plus 0.5em
  minus 0.4em\relax Birkh{\''{a}}user Boston, 2001.

\bibitem{KordaJones2017}
M.~Korda and C.~N. Jones, ``Stability and performance verification of
  optimization-based controllers,'' \emph{Automatica}, vol.~78, pp. 34--45,
  April 2017.

\bibitem{LiChenLow2011}
N.~Li, L.~Chen, and S.~H. Low, ``Optimal demand response based on utility
  maximization in power networks,'' in \emph{2011 IEEE Power and Energy Society
  General Meeting}, July 2011, pp. 1--8.

\bibitem{li2015}
S.~Li, W.~Zhang, J.~Lian, and K.~Kalsi, ``Market-based coordination of
  thermostatically controlled loads part {I}: A mechanism design formulation,''
  \emph{{IEEE} Trans. Power Syst.}, vol.~31, no.~2, pp. 1170--1178, 2015.

\bibitem{Liu1992}
D.~Liu and A.~N. Michel, ``Asymptotic stability of discrete-time systems with
  saturation nonlinearities with applications to digital filters,'' \emph{IEEE
  Transactions on Circuits and Systems I: Fundamental Theory and Applications},
  vol.~39, no.~10, pp. 798--807, Oct 1992.

\bibitem{Maciejowski2000}
J.~M. Maciejowski, \emph{Predictive control: with constraints}.\hskip 1em plus
  0.5em minus 0.4em\relax Prentice Hall, 2000.

\bibitem{MayneRawlingsRaoEtAl2000}
D.~Mayne, J.~Rawlings, C.~Rao, and P.~Scokaert, ``Constrained model predictive
  control: Stability and optimality,'' \emph{Automatica}, vol.~36, no.~6, pp.
  789--814, June 2000.

\bibitem{Meyer2000}
C.~D. Meyer, \emph{Matrix Analysis and Applied Linear Algebra}.\hskip 1em plus
  0.5em minus 0.4em\relax SIAM, 2000.

\bibitem{NazirHiskens}
\BIBentryALTinterwordspacing
M.~S. Nazir and I.~A. Hiskens, ``Load synchronization and sustained
  oscillations induced by transactive control,'' arXiv:1702.04863. [Online].
  Available: \url{https://arxiv.org/pdf/1702.04863.pdf}
\BIBentrySTDinterwordspacing

\bibitem{NutaroProtopopescu2009}
J.~Nutaro and V.~Protopopescu, ``The impact of market clearing time and price
  signal delay on the stability of electric power markets,'' \emph{{IEEE}
  Trans. Power Syst.}, vol.~24, no.~3, pp. 1337--1345, Aug 2009.

\bibitem{Primbs2001}
J.~A. Primbs, ``The analysis of optimization based controllers,''
  \emph{Automatica}, vol.~37, no.~6, pp. 933--938, June 2001.

\bibitem{RoozbehaniDahlehMitter2012}
M.~Roozbehani, M.~A. Dahleh, and S.~K. Mitter, ``Volatility of power grids
  under real-time pricing,'' \emph{{IEEE} Trans. Power Syst.}, vol.~27, no.~4,
  pp. 1926--1940, Nov 2012.

\bibitem{WidergrenSubbaraoFullerEtAl2014}
S.~E. Widergren, K.~Subbarao, J.~C. Fuller, D.~P. Chassin, A.~Somani, M.~C.
  Marinovici, and J.~L. Hammerstrom, ``{AEP Ohio gridSMART Demonstration
  Project Real-Time Pricing Demonstration Analysis},'' Pacific Northwest
  National Laboratory (PNNL), Tech. Rep., 2014, pNNL Report 23192.

\bibitem{ZhaoHaoZhang2016}
L.~Zhao, H.~Hao, and W.~Zhang, ``Extracting flexibility of heterogeneous
  deferrable loads via polytopic projection approximation,'' in \emph{Proc.
  IEEE Conf. Decision and Control (CDC)}, 2016.

\bibitem{ZhaoZhangHaoEtAl2017}
L.~Zhao, W.~Zhang, H.~Hao, and K.~Kalsi, ``A geometric approach to aggregate
  flexibility modeling of thermostatically controlled loads,'' \emph{{IEEE}
  Trans. Power Syst.}, To be published.

\bibitem{ZhouRoozbehaniDahlehEtAl2017}
D.~P. Zhou, M.~Roozbehani, M.~A. Dahleh, and C.~J. Tomlin, ``Stability analysis
  of wholesale electricity markets under dynamic consumption models and
  real-time pricing,'' in \emph{2017 American Control Conference}, 2017.

\end{thebibliography}

\end{document}